\newtheorem{Def}{Definition}[section]
\newtheorem{Thm}[Def]{Theorem}
\newtheorem{Prop}[Def]{Proposition}
\newtheorem{Rem}[Def]{Remark}
\newtheorem{Cor}[Def]{Corollary}
\numberwithin{equation}{section}
\newcommand{\Q}{\mathbb{Q}}
\newcommand{\R}{\mathbb{R}}
\newcommand{\C}{\mathbb{C}}
\newcommand{\Z}{\mathbb{Z}}
\newcommand{\hh}{\mathbb{H}}
\newcommand{\sym}{\mathrm{Sym}}
\newcommand{\trn}[1][1]{{}^t \hspace{-#1pt}}
\newcommand{\mat}[4]{\begin{pmatrix} #1 & #2 \\ #3 & #4 \end{pmatrix}}
\newcommand{\smat}[4]{\left(\begin{smallmatrix} #1 & #2 \\ #3 & #4 \end{smallmatrix}\right)}
\newcommand{\h}{\mathbb{H}}
\begin{document}

\title{Congruences for Siegel modular forms of nonquadratic nebentypus mod $p$}
\author{Siegfried B\"ocherer and Toshiyuki Kikuta}
\maketitle

\noindent
{\bf 2020 Mathematics subject classification}: Primary 11F33 $\cdot$ Secondary 11F46\\
\noindent
{\bf Key words}: mod $p^m$ singular, congruences for modular forms. 

\begin{abstract}
We prove that weights of two Siegel modular forms of nonquadratic nebentypus should satisfy some congruence relations if these modular forms are congruent to each other. 
Applying this result, we prove that there are no mod $p$ singular forms of nonquadratic nebentypus.
Here we consider the case where the Fourier coefficients of the modular forms are algebraic integers, 
and we emphasize that $p$ is a rational prime. 
Moreover, we construct some examples of mod $\frak{p}$ singular forms of nonquadratic nebentypus using the Eisenstein series studied by Takemori. 
\end{abstract}

\section{Introduction}
Serre \cite{Se} proved that the congruence $f_1\equiv f_2$ mod $p^r$ ($f_1\not \equiv 0$ mod $p$)  
implies $k_1\equiv k_2$ mod $(p-1)p^{r-1}$, where $p$ is an odd prime, $f_i$ ($i=1$, $2$) are modular forms for ${\rm SL}_2(\Z)$ of weights $k_i$. 
Later, this property was extended by Katz \cite{Katz} to the case of congruence subgroups of the type $\Gamma _1(N)$ with $p\nmid N$.
In Rasmussen \cite{Ra}, the case with algebraic integer Fourier coefficients is studied in detail, 
and the case of the group $\Gamma _0(p)\cap \Gamma _1(N)$ is also considered. 

All of the above results are for the case of elliptic modular forms. 
On the other hand, a generalization to the case of Siegel modular forms was given by Ichikawa \cite{Ichi} in an algebraic geometrical way and later given in an elementary way by the first author and Nagaoka \cite{Bo-Na}.
Here, the groups which they considered are of types $\Gamma (N)$ and $\Gamma _1(N)$ respectively ($p\nmid N$).

The first goal of this paper is to generalize the results mentioned above to the case 
of Siegel modular forms $F_i$ for a group $\Gamma _0(p^m)\cap \Gamma _1(N)$ ($m\ge 1$, $p\nmid N$) having a nebentypus characters $\chi _i$ mod $p^m$. 
The impetus for considering this congruence for weights in the case of nonquadratic nebentypus was to see if it could be applied to discuss the existence of a mod $p$ singular modular form of nonquadratic nebentypus. We explain this more precisely.

Over $\mathbb C$, singular Siegel modular forms are defined by the nonvanishing of all their Fourier coefficients of maximal rank; 
the properties of such modular forms were extensively studied by Freitag (see e.g. \cite{Fr}); 
in particular, he showed that there are no such modular
forms for $\Gamma_0(N)$ with nonquadratic character $\chi$ mod $N$.

In the works of the authors \cite{Bo-Ki,Bo-Ki2} on mod $p$ singular forms we only considered the case of quadratic characters. 
By applying our results on congruence with respect to weights as above, we find that there is no mod $p$ singular form
of nonquadratic nebentypus.
Here we consider the case where the Fourier coefficients of the modular forms are algebraic integers, 
and we emphasize that $p$ is rational prime. 
In fact, there exists a mod $\frak{p}$ singular modular form of nonqudratic nebentypus. 
We will show that it can be constructed from the Eisenstein series of nebentypus in some cases.

%%%%%%%%%%%%%%%%%%%%%%%%%%%%%%%%%%%%%%%%%%%%%%%
\section{Preliminaries}
\label{Sec:2}
\subsection{Siegel modular forms}
\label{sec:siegel-modular-forms}
Let $n$ be a positive integer and
$\hh_{n}$ the Siegel upper half space of degree $n$ defined as
\begin{equation*}
  \h_{n}:=\left\{X+ i Y \; | \;
    X,\ Y\in \sym_{n}(\R), \ Y>0
  \right\},
\end{equation*}
where $Y>0$ means that $Y$ is positive definite, and $\sym_{n}(R)$ is the set of symmetric matrices of size $n$ with components in $R$.

The Siegel modular group $\Gamma _n$ of degree $n$ is defined by 
\begin{equation*}
  \Gamma_{n} := \left\{\gamma \in \mathrm{GL}_{2n}(\Z)
    \; |\; \trn \gamma J_{n} \gamma = J_{n}
  \right\},
\end{equation*}
where $J_{n} = \smat{0_{n}}{-1_{n}}{1_{n}}{0_{n}}$ and $0_{n}$ (resp. $1_{n}$)
is the zero matrix (resp. the identify matrix) of size $n\times n$.

We define an action of $\Gamma _n$ on $\hh_{n}$ by
$\gamma Z := (AZ + B)(CZ + D)^{-1}$ for $Z \in \hh_{n}$, $\gamma = \left( \begin{smallmatrix} A & B \\ C & D \end{smallmatrix}\right) \in \Gamma _n$.
For a holomorphic function $F:\mathbb{H}_n\longrightarrow \mathbb{C}$ and a matrix $\gamma =\left( \begin{smallmatrix} A & B \\ C & D \end{smallmatrix}\right)\in \Gamma _n$,
we define a slash operator by
\[F|_k\; \gamma :=\det(CZ+D)^{-k}F(\gamma Z).\]

Let $N$ be a positive integer. 
In this paper, 
we deal with congruence subgroups of $\Gamma _n$ defined as 
\begin{align*}
&\Gamma _1(N):=\left\{ \begin{pmatrix}A & B \\ C & D \end{pmatrix}
\in \Gamma _n \: \Big| \: C\equiv 0_n \bmod{N},\ A \equiv D \equiv 1_n \bmod{N} \right\},\\
&\Gamma _0(N):=\left\{ \begin{pmatrix}A & B \\ C & D \end{pmatrix}\in \Gamma _n \: \Big| \: C\equiv 0_n \bmod{N} \right\}. 
\end{align*}
Here $A$, $B$, $C$, $D$ are $n \times n$ matrices.

Let $N$ be coprime to $p$. 
For a natural number $k$ and a Dirichlet character
$\chi : (\mathbb{Z}/p^m \mathbb{Z})^\times \rightarrow \mathbb{C}^\times $, the space
$M_k(\Gamma _0^{(n)}(p^m)\cap \Gamma _1^{(n)}(N), \chi )$
of Siegel modular forms of weight $k$ with
character $\chi$ (or nebentypus) consists of all of holomorphic
functions $F:\mathbb{H}_n\rightarrow \mathbb{C}$ satisfying
\begin{equation*}
%\label{trans}
(F|_{k}\: \gamma )(Z)=\chi (\det D)F(Z)\quad \text{for}\quad \gamma =\begin{pmatrix}A & B \\ C & D \end{pmatrix}\in \Gamma _0(p^m)\cap \Gamma _1(N).
\end{equation*}
If $n=1$, the usual condition in the cusps should be added.
When $\chi $ is a trivial character, we write simply $M^n_k(\Gamma _0(p^m)\cap \Gamma _1(N))$ for $M^n_k(\Gamma _0(p^m)\cap \Gamma _1(N),\chi )$.

Any $F \in M_k(\Gamma _0(p^m)\cap \Gamma _1(N),\chi )$ has a Fourier expansion of the form
\[
F(Z)=\sum_{0\leq T\in\Lambda_n}a_F(T)q^T,\quad q^T:=e^{2\pi i {\rm tr}(TZ)},
\quad Z\in\mathbb{H}_n,
\]
where
\[
\Lambda_n
:=\{ T=(t_{ij})\in {\rm Sym}_n(\mathbb{Q})\;|\; t_{ii},\;2t_{ij}\in\mathbb{Z}\}.
\]
We put $\Lambda _n^+:=\{T\in \Lambda _n \;|\;T>0 \}$. 

For a subring $R$ of $\mathbb{C}$, let $M^n_{k}(\Gamma _0(p^m)\cap \Gamma _1(N),\chi )(R)$ (resp. $M^n_{k}(\Gamma _0(p^m)\cap \Gamma _1(N))(R)$)
denote the $R$-module of all modular forms in $M^n_{k}(\Gamma _0(p^m)\cap \Gamma _1(N),\chi )$ (resp. $M^n_{k}(\Gamma _0(p^m)\cap \Gamma _1(N))$)
 whose Fourier coefficients are in $R$.

%%%%%%%%%%%%%%%%%%%%%%%%%%%%%%%%%%%%%%%%%%%%%%%%%%%%%%
\subsection{Congruences for modular forms}
Let $K$ be an algebraic number field, ${\mathcal O}_K$ the ring of integers in $K$.  
We define $\nu _\frak{p}(\gamma ):=e_\frak{p}$ for $\gamma \in K$, where $e_\frak{p}\in \Z$ is defined by 
$\gamma {\mathcal O}_K=\prod \frak{p}^{e_\frak{p}}$ (prime ideal factorization).
Let $F$ be a formal power series of the form
\begin{align*}
F=\sum _{T\in \Lambda _{n}}a_{F}(T)q^T
\end{align*}
with $a_{F}(T)\in K$ for all $T\in \Lambda _n$. 
We define 
\[\nu _{\frak{p}}(F):=\inf \{\nu _\frak{p}(a_F(T))\;|\; T\in \Lambda _n\}. \]
Let $F_i=\sum _{T\in \Lambda _{n}}a_{F_i}(T)q^T$ ($i=1$, $2$) be two formal power series as above. 
For an ideal $\frak{a}=\prod \frak{p}^{e_\frak{p}}$ in ${\mathcal O}_K$, suppose that $\nu _{\frak{p}}(F_i)\ge 0$ for all $\frak{p}$ with $\frak{p}\mid \frak{a}$. 
We write $F_1 \equiv F_2$ mod $\frak{a}$ if $\nu _\frak{p}(F_1-F_2)\ge e_{\frak{p}}$ for all prime ideals $\frak{p}$  with $\frak{p}\mid \frak{a}$.

Let $p$ be an odd rational prime and $\frak{p}$ a prime ideal in ${\mathcal O}_K$ such that $\frak{p}\mid p{\mathcal O}_K$. 
We denote by $e$ the ramification index $e=e(\frak{p}/p)$ of $\frak{p}$. 
Following Rasmussen \cite{Ra}, we take the Galois closure $L$ of $K$ 
and a prime ideal $\frak{P}$ in ${\mathcal O}_L$ with $\frak{P}\mid \frak{p}{\mathcal O}_L$. 
We put 
\[\beta (r):=\max\left\{ \Big \lceil \frac{r}{e}\Big \rceil-s-1,0\right\}, 
\] 
where $s:=\nu_p(\tilde{e})$ and $\tilde{e}:=e(\frak{P}/p)$ is the ramification index of $\frak{P}$. 
We remark that $\beta (r)=r-1$ for $K=\Q$. 
\begin{Thm}[Rasmussen \cite{Ra} Theorem 2.16]
\label{thm:Ra}
Let $N$ be a natural number with $p\nmid N$ and $F_i\in M_{k_i}^1(\Gamma _0(p)\cap \Gamma _1(N))({\mathcal O}_K)$. 
If $F_1\equiv F_2$ mod $\frak{p}^r$ and $F_1\not \equiv 0$ mod $\frak{p}$, then we have 
\[k_1\equiv k_2 \bmod{(p-1)p^{\beta (r)}}. \] 
\end{Thm}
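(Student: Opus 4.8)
The plan is to follow Serre's original argument for $\mathrm{SL}_2(\Z)$, adapting the $p$-adic bookkeeping to the ring ${\mathcal O}_K$ in the way Rasmussen does. The one indispensable tool is the normalized Eisenstein series $E_{p-1}$ of weight $p-1$ for $\mathrm{SL}_2(\Z)$: it has constant term $1$ and satisfies $E_{p-1}\equiv 1 \bmod p$ by the Clausen--von Staudt and Kummer congruences for Bernoulli numbers. Since $E_{p-1}$ has level $1$ and trivial character, multiplication by it maps $M^1_k(\Gamma _0(p)\cap \Gamma _1(N))({\mathcal O}_K)$ into $M^1_{k+p-1}(\Gamma _0(p)\cap \Gamma _1(N))({\mathcal O}_K)$ while leaving the $q$-expansion unchanged modulo $p$; its powers satisfy the sharper congruences $E_{p-1}^{p^{j}}\equiv 1 \bmod p^{\,j+1}$, and these will drive all of the higher-precision statements.

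First I would dispose of the base case, divisibility by $p-1$ (the range where $\beta (r)=0$). Reducing the identical power series $F_1$ and $F_2$ modulo $\frak{p}$ gives equal, and by hypothesis nonzero, mod-$\frak{p}$ modular forms of weights $k_1$ and $k_2$. The theorem of Swinnerton-Dyer, in the form that the weight of a nonzero mod-$p$ modular form is well defined modulo $p-1$ (the only ambiguity being multiplication by the Hasse invariant, i.e.\ the reduction of $E_{p-1}$), then forces $k_1\equiv k_2 \bmod (p-1)$.

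The higher-precision statement I would obtain by induction on $\lceil r/e\rceil$. Granting $k_1\equiv k_2$ modulo the previous power of $p$, write the difference of weights as a multiple of that power and multiply the form of smaller weight by a suitable $E_{p-1}^{p^{j}}$ to align the two weights modulo the next power; the congruence $F_1\equiv F_2 \bmod \frak{p}^r$ together with $F_1\not\equiv 0$ then shows the alignment was already forced, producing the extra factor of $p$. It is clarifying to phrase this step after Katz, reading the weight of $F_1$ as the continuous character $x\mapsto x^{k_i}$ of $\Z_p^\times$: congruence of the $q$-expansions modulo $\frak{p}^r$ transfers to congruence of these weight characters, so the problem becomes that of determining the largest $d$ for which $x^{d}\equiv 1 \bmod \frak{p}^r$ holds for all $x\in \Z_p^\times$. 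The factor $p-1$ comes from the torsion $(\Z/p)^\times$ and the $p$-power part from the principal units.

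The genuine obstacle is the last point: isolating the \emph{exact} exponent $\beta (r)=\max\{\lceil r/e\rceil-s-1,0\}$, and in particular the correction $s=\nu_p(\tilde{e})$. A direct count carried out in $K$ only yields divisibility by $(p-1)p^{\lceil r/e\rceil-1}$; it is the wild part of the ramification that weakens this. To capture it I would pass to the Galois closure $L$ and the prime $\frak{P}$, where the Galois action makes the relevant $p$-power roots of unity available and where the descent back to $K$ must be controlled through the wild part of the different. Computing the precise $\frak{P}$-adic valuations that govern $E_{p-1}^{p^{j}}-1$ (equivalently, the $\frak{P}$-adic distance from $1$ of a $p$-power root of unity) shows that the attainable precision is shifted by exactly $s=\nu_p(\tilde{e})$, which is what replaces the crude exponent $\lceil r/e\rceil-1$ by $\beta (r)$ and recovers $\beta (r)=r-1$ in the unramified case $K=\Q$. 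Keeping the three valuations $\nu_\frak{p}$, $\nu_{\frak{P}}$ and $\nu_p$ mutually consistent, so that the elementary manipulations with $E_{p-1}$ survive the descent from $L$ to $K$, is the delicate bookkeeping on which the whole argument rests.
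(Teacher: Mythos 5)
First, the point of comparison: the paper does not prove Theorem \ref{thm:Ra} at all. It is imported as a black box from Rasmussen's thesis \cite{Ra} (Theorem 2.16), and the only argument the paper adds is the remark that the case $N\le 2$ follows from the case $N\ge 3$ via $\Gamma_1(N)\supset \Gamma_1(M)$. So your attempt can only be measured against what a complete proof requires, and against Rasmussen's own strategy, whose general shape (Serre--Katz: $E_{p-1}\equiv 1\bmod p$, Swinnerton--Dyer for the modulus $p-1$, powers $E_{p-1}^{p^j}\equiv 1\bmod p^{j+1}$ for the induction, passage to the Galois closure for the ramification bookkeeping) your outline does reproduce. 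Measured that way, there are two genuine gaps.

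The first is the level. Every tool you invoke --- Swinnerton--Dyer's theorem that the weight of a nonzero mod $p$ form is well defined modulo $p-1$, and Katz's reading of the weight as a character of $\Z_p^{\times}$ --- is a statement about forms whose level is \emph{prime to} $p$ (and, in Katz's moduli-theoretic form, of level at least $3$, which is also why the paper needs its remark for $N\le 2$). Your $F_i$ live on $\Gamma_0(p)\cap\Gamma_1(N)$, whose level is divisible by $p$; this is exactly what distinguishes Rasmussen's Theorem 2.16 from Katz's theorem. The missing step is the reduction to prime-to-$p$ level: one must show that each $F_i$ agrees, modulo the relevant power of $\frak{p}$, with the $q$-expansion of a form on $\Gamma_1(N)$ of controlled weight; for $N=1$ this is Serre's trace-operator argument that forms on $\Gamma_0(p)$ are $p$-adic forms of level one \cite{Se}, and its generalization occupies a substantial part of Rasmussen's thesis. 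That this is not a technicality is shown by the paper's own Theorem \ref{thm:Lang}: $E_{1,\psi}\equiv 1 \bmod \frak{p}$ is a congruence between two nonzero forms on $\Gamma_1(p)$ of weights $1$ and $0$, and $1\not\equiv 0 \bmod (p-1)$. An argument that manipulates mod $\frak{p}$ $q$-expansions of level-$p$ forms without genuinely using the trivial nebentypus on $\Gamma_0(p)$ would ``prove'' the same weight congruence in that situation, where it is false. Both your base case and your inductive step have this defect.

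The second gap concerns the exponent, and here you have in fact misread the statement. Since $s=\nu_p(\tilde{e})\ge 0$, one has $\beta(r)=\max\{\lceil r/e\rceil-s-1,0\}\le \lceil r/e\rceil-1$: the correction by $s$ makes the conclusion \emph{weaker}, not sharper. So the ``direct count'' you describe ($x^{k_1-k_2}\equiv 1 \bmod \frak{p}^r$ for all $x\in\Z_p^{\times}$, tested on a topological generator), if it were justified, would already give $k_1\equiv k_2 \bmod (p-1)p^{\lceil r/e\rceil -1}$ and hence the theorem with room to spare; there is no ``genuine obstacle'' of capturing $s$ beyond it. The actual difficulty is that this count is not justified, precisely because of the level problem above, and the factor $p^{s}$ is a loss incurred in Rasmussen's descent through $L$, which must be tracked, not a refinement to be extracted. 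Your proposed mechanism cannot reveal it in any case: $E_{p-1}$ has rational integer coefficients, so $\nu_{\frak{P}}(E_{p-1}^{p^j}-1)=\tilde{e}\,(j+1)$ exactly, with no contribution from roots of unity or wild ramification; and the $p$-power roots of unity of $L_{\frak{P}}$ (of order at most $p^{s+1}$), which you name as the culprit, never actually appear at any point of your argument. As it stands, the higher-precision half of the theorem is asserted rather than proved.
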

\begin{Rem}
Rasmussen \cite{Ra} assumed $N\ge 3$. 
However the statement for $N\le 2$ follows from that for $N\ge 3$, since there is $M$ such that $\Gamma _1(N)\supset \Gamma _1(M)$ and $M\ge 3$.   
\end{Rem}

%%%%%%%%%%%%%%%%%%%%%%%%%%%%%%%%%%%%%%%%%%%%%%%%%%%%%%%%%%%
\subsection{An Eisenstein series congruent to a constant}
\label{Char}
Let $p$ be an odd rational prime and $\mu _{p-1}$ the group of $(p-1)$th root of unity in $\C^{\times}$. 
We consider the prime ideal factorization of $p$ in the ring $\Z[\mu _{p-1}]$ of integers in $\Q(\mu _{p-1})$.
We take a generator $\zeta _{p-1}$ of $\mu _{p-1}$. 
Let $\Phi(X)\in \Z[X]$ be the minimal polynomial of $\zeta _{p-1}$, namely $\Phi(X)$ is the cyclotomic polynomial having the root $\zeta_{p-1}$. 
Then $\Phi(X)$ can be decomposed in the form $\Phi (X)\equiv q_1(X)\cdots q_r(X)$ mod $p$, were $r=\varphi (p-1)$ and each $q_i(X)$ is a polynomial of degree $1$ satisfying $q_i(X)\not \equiv q_j(X)$ mod $p$. 
If we write $q_i(X)=X-d_i$ with $d_i\in \Z$, then we have $d_i\not \equiv d_j$ mod $p$ because of $q_i(X)\not \equiv q_j(X)$ mod $p$. 
Then $p$ is decomposed as a product of $r$ prime ideals $\frak{p}_i:=(\zeta _{p-1}-d_i,p)$, 
namely we have the perfect decomposition 
\[p\Z[\mu _{p-1}]=\frak{p}_1\cdots \frak{p}_r=(\zeta _{p-1}-d_1,p)\cdots (\zeta _{p-1}-d_r,p).\]
For this fact, we refer to Washington \cite{Wa}, p.15. 

We take a prime ideal $\frak{p}_i$ from above.  
We can define a character $\psi_i $ :$(\Z/p\Z)^\times \to \C^\times $ so that 
\[\psi_i (m)m\equiv 1 \bmod{\frak{p}_i}\]
 for all $m$ with $1\le m \le p-1$.  
Note that $\psi_i (-1)=-1$ and $\psi _i\neq \psi _j$ for all $i$, $j$ with $i\neq j$. 
We shall say that $\psi _i$ is the character corresponding $\frak{p}_i$.

%Let $\psi _i$ be the character corresponding a prime ideal $\frak{p}_i$ mentioned in previous subsection.   
Let $E_{1,\psi _i}$ be the Eisenstein series of weight $1$ for $\Gamma _0(p)$ with character $\psi _i$ of the form
\[E_{1,\psi_i }=1-\frac{2}{B_{1,\psi _i }}\sum _{n=1}^{\infty }\left( \sum _{d\mid n}\psi _i (d) \right)q^n,\]
where $B_{1,\psi _i}$ is the first generalized Bernoulli number with character $\psi _i$ described as
\[B_{1,\psi _i}=\frac{1}{p}\sum _{m=1}^{p-1}\psi_i(m)m. \] 
\begin{Thm}[Lang \cite{Lan} Theorem 1.2, p.250, Rasmussen \cite{Ra}, p.12]
\label{thm:Lang}
We have $\nu _{\frak{p}_i}(E_{1,\psi _i})\ge 0$ and  
\[E_{1,\psi _i } \equiv 1 \mod{\frak{p}_i}.\]
In other words, if we take $\gamma \in \Z[\mu _{p-1}]$ with $\gamma \not \in \frak{p}_i$ such that 
${\mathcal E}_{1,\psi_i}:=\gamma E_{1,\psi _i}\in M_1(\Gamma _0(p),\psi_i)(\Z[\mu _{p-1}])$,
then ${\mathcal E}_{1,\psi_i}\equiv \gamma \not \equiv 0$ mod $\frak{p}_i$ holds. 
\end{Thm}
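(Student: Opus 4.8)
The plan is to read off the $\mathfrak{p}_i$-adic valuations of the Fourier coefficients of $E_{1,\psi _i}$ directly from the given expansion, the whole argument resting on the defining congruence $\psi _i(m)m\equiv 1\bmod{\mathfrak{p}_i}$ together with the complete splitting $p\Z[\mu _{p-1}]=\mathfrak{p}_1\cdots \mathfrak{p}_r$ recorded above, which gives $\nu _{\mathfrak{p}_i}(p)=1$.

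First I would pin down the valuation of the generalized Bernoulli number $B_{1,\psi _i}=\frac{1}{p}\sum_{m=1}^{p-1}\psi _i(m)m$. Using $\psi _i(m)m\equiv 1\bmod{\mathfrak{p}_i}$ for $1\le m\le p-1$, the numerator satisfies $\sum_{m=1}^{p-1}\psi _i(m)m\equiv p-1\equiv -1\bmod{\mathfrak{p}_i}$, so it is a $\mathfrak{p}_i$-unit, i.e. $\nu _{\mathfrak{p}_i}\!\left(\sum_{m=1}^{p-1}\psi _i(m)m\right)=0$. Since $\nu _{\mathfrak{p}_i}(p)=1$, this yields $\nu _{\mathfrak{p}_i}(B_{1,\psi _i})=-1$ (in particular $B_{1,\psi _i}\neq 0$, consistent with $\psi _i$ being odd), and hence, as $p$ is odd so that $2$ is a $\mathfrak{p}_i$-unit, $\nu _{\mathfrak{p}_i}\!\left(\frac{2}{B_{1,\psi _i}}\right)=1$.

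Next I would bound the remaining coefficients. For $n\ge 1$ the coefficient of $q^n$ equals $-\frac{2}{B_{1,\psi _i}}\sum_{d\mid n}\psi _i(d)$; each $\psi _i(d)$ is a root of unity or $0$, hence a $\mathfrak{p}_i$-integer, so $\nu _{\mathfrak{p}_i}\!\left(\sum_{d\mid n}\psi _i(d)\right)\ge 0$ and thus every such coefficient has valuation $\ge 1>0$. Together with the constant term $1$ this shows simultaneously that $\nu _{\mathfrak{p}_i}(E_{1,\psi _i})=0\ge 0$ and that all nonconstant coefficients lie in $\mathfrak{p}_i$, i.e. $E_{1,\psi _i}\equiv 1\bmod{\mathfrak{p}_i}$. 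For the reformulation, one picks $\gamma \in \Z[\mu _{p-1}]\setminus \mathfrak{p}_i$ clearing the denominators of $E_{1,\psi _i}$ away from $\mathfrak{p}_i$; since $\gamma $ is a $\mathfrak{p}_i$-unit, multiplication by $\gamma $ preserves all valuation estimates, so the constant term becomes $\gamma $ while the higher coefficients retain valuation $\ge 1$, giving $\mathcal{E}_{1,\psi _i}\equiv \gamma \not\equiv 0\bmod{\mathfrak{p}_i}$.

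The main, and essentially only, delicate point is the exact valuation of $B_{1,\psi _i}$: one must verify that the numerator sum does not itself fall into $\mathfrak{p}_i$ and thereby cancel the simple pole contributed by $\frac{1}{p}$. This is precisely where the defining property of $\psi _i$ is decisive, collapsing each term $\psi _i(m)m$ to $1$ modulo $\mathfrak{p}_i$ and leaving only the harmless unit $-1$.
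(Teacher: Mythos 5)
Your proof is correct. The paper does not prove Theorem \ref{thm:Lang} itself---it is quoted from Lang and Rasmussen---and your argument is exactly the standard one behind those citations: the defining congruence $\psi_i(m)m\equiv 1 \bmod{\frak{p}_i}$ gives $pB_{1,\psi_i}\equiv p-1\equiv -1 \bmod{\frak{p}_i}$, hence $\nu_{\frak{p}_i}(2/B_{1,\psi_i})=1$ (using that $p$ splits completely in $\Z[\mu_{p-1}]$, so $\nu_{\frak{p}_i}(p)=1$), and the $\frak{p}_i$-integrality of the twisted divisor sums $\sum_{d\mid n}\psi_i(d)$ finishes the claim. Two points in your favor: you claim only $\frak{p}_i$-integrality and a congruence mod $\frak{p}_i$---not integrality of the Fourier coefficients in $\Z[\mu_{p-1}]$, nor a congruence mod $p$---which is precisely the correction of the literature that the paper records in Remark \ref{rem:Eis}\,(3); and your unit computation for $\sum_{m}\psi_i(m)m$ is genuinely specific to $\frak{p}_i$ and may fail at $\frak{p}_j$ with $j\neq i$, consistent with Remark \ref{rem:Eis}\,(2).
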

\begin{Rem}
\label{rem:Eis}
\begin{enumerate}
\item 
The existence of $\gamma $ in the statement follows from the strong approximation theorem for the number fields. 
\item
As we will see later, we have
\[E_{1,\psi _i } \not \equiv 1 \mod{p\Z[\mu _{p-1}]}\]
 for any rational prime $p$ with $p\ge 5$. 
In other words, there exists $\frak{p}_j$ with $i\neq j$ such that 
\[E_{1,\psi _i } \not \equiv 1 \mod{\frak{p}_j},\]
and equivalently $pB_{1,\psi_i}\equiv 0$ mod $\frak{p}_j$. 
We do not know if $\nu_{\frak{p}_j}(E_{1,\psi _i })\ge 0$ to begin with. 
\item
Lang \cite{Lan} states that $E_{1,\psi _i } \equiv 1$ mod $p$ and 
Rasmussen \cite{Ra} states that $E_{1,\psi _i }$ has a Fourier coefficients in $\Z[\mu _{p-1}]$. 
Both statements seem to be inaccurate. 
\end{enumerate}
\end{Rem}

%%%%%%%%%%%%%%%%%%%%%%%%%%%%%%%%%%%%%%%%%%%%%%%%%%%%%%%%%%%
\section{Main results and their proofs}
\subsection{Statements of main results}
The first main result concerns congruences for the weights of modular forms of nonquadratic nebentypus. 
\begin{Thm}
\label{thm:M}
Let $p$ be an odd rational prime and $K$ a number field including $\Q(\mu _{p-1})$. 
Let $\frak{P}$ be a prime ideal in ${\mathcal O}_K$ with $\frak{P}\mid p{\mathcal O}_K$. 
For a prime ideal $\frak{p}$ in $\Z[\mu _{p-1}]$ such that $\frak{p}\mid p\Z[\mu _{p-1}]$ and $\frak{P}\mid \frak{p}{\mathcal O}_K$,  
we take the character $\psi $ corresponding $\frak{p}$ (see Subsection {\ref{Char}}). 
Let $\chi _i$ ($i=1$, $2$) be two Dirichlet characters mod $p^{m_i}$ with conductor $m_i'$. 
We put $m:=\max\{m_1',m_2'\}$ and take $\alpha _i$ with $0\le \alpha_i \le  p-2$ such that $\chi _i^{p^{m-1}} =\psi ^{\alpha _i}$.  
If $F_i\in M^n_{k_i}(\Gamma_0(p^{m_i})\cap \Gamma_1(N),\chi _i )(\mathcal{O}_K)$ satisfies that $F_1\equiv F_2$ mod $\frak{P}^r$ and $F_1\not \equiv 0$ mod $\frak{P}$, then we have 
\[k_1\cdot p^{m-1}-\alpha _1\cdot p^{r-1}\equiv k_2\cdot p^{m-1}-\alpha _2\cdot p^{r-1} \bmod{(p-1)p^{\beta (r)}}. \]
\end{Thm}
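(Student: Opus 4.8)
The plan is to reduce the statement to Rasmussen's theorem (Theorem~\ref{thm:Ra}) by building, out of each $F_i$, a modular form of \emph{trivial} nebentypus whose weight records the quantity $k_ip^{m-1}-\alpha_ip^{r-1}$, while preserving both the congruence and the non-vanishing mod $\frak{P}$. Two devices do the work: raising $F_i$ to the $p^{m-1}$-th power, which kills the wild part of $\chi_i$ and (by hypothesis $\chi_i^{p^{m-1}}=\psi^{\alpha_i}$) turns its character into the tame character $\psi^{\alpha_i}$; and multiplying by a suitable power of the weight-one Eisenstein series $E_{1,\psi}$ of Theorem~\ref{thm:Lang}, which absorbs $\psi^{\alpha_i}$ at the cost of a weight shift that is invisible modulo the final modulus.

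Concretely, I set $E:=E_{1,\psi}$ and
\[G_i:=F_i^{\,p^{m-1}}\cdot E^{(p-1-\alpha_i)p^{r-1}}\qquad(i=1,2).\]
Since $\psi$ has order $p-1$ and $p^{r-1}\equiv1\pmod{p-1}$, we have $\psi^{(p-1-\alpha_i)p^{r-1}}=\psi^{-\alpha_i}$, so the nebentypus of $G_i$ is $\psi^{\alpha_i}\cdot\psi^{-\alpha_i}=1$; thus $G_i\in M^n_{\kappa_i}(\Gamma_0(p^{m_i})\cap\Gamma_1(N))(\mathcal{O}_K)$ has trivial character and weight $\kappa_i:=k_ip^{m-1}+(p-1-\alpha_i)p^{r-1}$. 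Next I would establish the refined Eisenstein congruence $E^{p^{r-1}}\equiv1\pmod{\frak{P}^{r}}$: starting from $E\equiv1\pmod{\frak{P}}$ (which follows from $E\equiv1\pmod{\frak{p}}$ and $\frak{P}\mid\frak{p}\mathcal{O}_K$), the binomial expansion of $E^{p}-1$ together with $\nu_{\frak{P}}(p)\ge1$ gives $\nu_{\frak{P}}(E^{p}-1)\ge2$, and induction yields $\nu_{\frak{P}}(E^{p^{r-1}}-1)\ge r$. Taking a further $(p-1-\alpha_i)$-th power preserves this, so $G_i\equiv F_i^{\,p^{m-1}}\pmod{\frak{P}^{r}}$. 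Because $F_1\equiv F_2\pmod{\frak{P}^{r}}$ forces $F_1^{\,p^{m-1}}\equiv F_2^{\,p^{m-1}}\pmod{\frak{P}^{r}}$ (integral factorization of $a^N-b^N$), I obtain $G_1\equiv G_2\pmod{\frak{P}^{r}}$; and $G_1\equiv F_1^{\,p^{m-1}}\not\equiv0\pmod{\frak{P}}$ since $\mathcal{O}_K/\frak{P}$ is a domain and $F_1\not\equiv0\pmod{\frak{P}}$.

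It then remains to prove $\kappa_1\equiv\kappa_2\pmod{(p-1)p^{\beta(r)}}$ for the trivial-character forms $G_1,G_2$ and to translate this back. To invoke Theorem~\ref{thm:Ra} I must descend to genus one: I would use Siegel's $\Phi$-operator, which is $\mathcal{O}_K$-linear on Fourier coefficients (hence compatible with reduction mod $\frak{P}$) and multiplicative, so that $\Phi^{n-1}(\overline{G_i})=(\Phi^{n-1}(\overline{F_i}))^{p^{m-1}}$; applying $\Phi$ yields genus-one forms to which Theorem~\ref{thm:Ra} gives $\kappa_1\equiv\kappa_2\pmod{(p-1)p^{\beta(r)}}$. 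Finally, since $e\ge1$ forces $\beta(r)\le\lceil r/e\rceil-1\le r-1$, the shift $(p-1-\alpha_i)p^{r-1}=-\alpha_ip^{r-1}+(p-1)p^{r-1}$ contributes a common term $(p-1)p^{r-1}$ divisible by $(p-1)p^{\beta(r)}$; subtracting it from both sides converts $\kappa_1\equiv\kappa_2$ into the asserted congruence $k_1p^{m-1}-\alpha_1p^{r-1}\equiv k_2p^{m-1}-\alpha_2p^{r-1}\pmod{(p-1)p^{\beta(r)}}$.

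The main obstacle is exactly this genus-one descent. Siegel's $\Phi$-operator annihilates a form that is cuspidal mod $\frak{P}$, so if $\overline{F_1}$ survives only down to some genus $n_0>1$ (i.e. $\Phi^{n-n_0}(\overline{F_1})$ is a nonzero mod-$\frak{P}$ cusp form), one cannot reach genus one and must dispose of that case by an arbitrary-genus version of Theorem~\ref{thm:Ra}. A related point is the level: $G_i$ a priori transforms only under $\Gamma_0(p^{m_i})\cap\Gamma_1(N)$, not $\Gamma_0(p)\cap\Gamma_1(N)$, so the argument genuinely needs the trivial-character weight-congruence theorem for $\Gamma_0(p^{m})\cap\Gamma_1(N)$ in every genus, which I expect to follow from the methods of Rasmussen \cite{Ra} and of B\"ocherer--Nagaoka \cite{Bo-Na}. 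By contrast, the nebentypus input---the genuinely new ingredient---is handled completely by the clean construction of $G_i$ above.
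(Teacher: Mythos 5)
Your character-killing device is exactly the one the paper uses: raise to the $p^{m-1}$-th power so that the nebentypus becomes $\psi^{\alpha_i}$, multiply by $\{(E_{1,\psi})^{p-1-\alpha_i}\}^{p^{r-1}}$ to absorb it (using $E_{1,\psi}^{p^{r-1}}\equiv 1 \bmod \frak{P}^{r}$), apply Theorem \ref{thm:Ra} to the resulting trivial-nebentypus forms, and strip off the common shift $(p-1)p^{r-1}$ at the end; this part, including the final arithmetic, agrees with the paper's proof. The genuine gap is the descent to genus one, and it is the crux rather than a technicality. Siegel's $\Phi$-operator annihilates any form whose reduction mod $\frak{P}$ is cuspidal --- for instance $F_1$ an honest cusp form of degree $n$ with $F_1\not\equiv 0 \bmod \frak{P}$ --- so your descent destroys the hypothesis ``$f_1\not\equiv 0 \bmod \frak{P}$'' precisely in those cases. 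You acknowledge this, but your fallback, an ``arbitrary-genus version of Theorem \ref{thm:Ra}'' for $\Gamma_0(p^{m})\cap\Gamma_1(N)$, is essentially the trivial-character case of Theorem \ref{thm:M} itself: it is not in the literature (\cite{Bo-Na} treats $\Gamma_1(N)$ with $p\nmid N$; Rasmussen \cite{Ra} treats only degree $1$), so you are assuming what must be proved. The paper closes exactly this gap with a different tool, the ``integral extract'' of \cite{Bo-Na} (Reduction 2): averaging $F|_k\smat{1_n}{J/R}{0}{1_n}$ over symmetric $J$ mod $R$ against a character of $T_0$ produces, for suitable $R$ prime to $p$, an elliptic modular form $f\in M^1_{k}(\Gamma_0(p^{m})\cap\Gamma_1(NR^2),\chi)$ whose Fourier coefficients are finite sums of Fourier coefficients of $F$ and which satisfies $\nu_{\frak{p}}(f)=\nu_{\frak{p}}(F)$. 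This exact preservation of the valuation is what lets the nonvanishing mod $\frak{P}$ survive the descent, with no cuspidality obstruction; the paper's added work is checking that the extract carries the nebentypus $\chi$ on $\Gamma_0(p^{m})\cap\Gamma_1(NR^2)$.

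A second defect forces your order of operations to be reversed: $G_i=F_i^{p^{m-1}}E^{(p-1-\alpha_i)p^{r-1}}$ multiplies a degree-$n$ Siegel form by the elliptic Eisenstein series $E_{1,\psi}$, which is not defined, and you cannot substitute a degree-$n$ Eisenstein series congruent to $1$ mod $\frak{p}$, since the paper's remark after Proposition \ref{prop:M} states that the existence of such a form is unknown in general (only degree $\le 2$, or regular $p$ via Takemori's unpublished work). So one must descend to degree $1$ first (by the integral extract) and only then perform the Eisenstein trick --- which is exactly the paper's proof. Two smaller points you leave as expectations are also handled in the paper: the level $p^{m_i}$ is lowered to the conductor $p^{m_i'}$ mod $\frak{P}^{r}$ at the cost of a weight shift by a multiple of $(p-1)p^{r}$ (Reduction 1, via Section 8.2 of \cite{Bo-Ki2}, harmless because $\beta(r)\le r$), and integrality requires inserting a scalar $\gamma\notin\frak{P}$ as in Theorem \ref{thm:Lang}, since $E_{1,\psi}$ need not have $\mathcal{O}_K$-integral coefficients away from $\frak{p}$.
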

\begin{Rem}
The theorem above is essentially a statement about characters mod $p$. 
In fact, both characters $\chi _i^{p^{m-1}}$ ($i=1, 2$) are defined mod $p$, because $\sharp (\Z/p^{m'_i}\Z)^\times =p^{m'_i-1}(p-1)$ and $\chi_i ^{p^{m-1}}=(\chi_i^{p^{m'_i-1}})^{p^{m-m'_i}}$ ($i=1$, $2$). 
Therefore we can take $\alpha _i$ with $0\le \alpha_i \le  p-2$ such that $\chi _i^{p^{m-1}} =\psi ^{\alpha _i}$.  
\end{Rem}

As a consequence of the above results, it can be shown that the characters are somewhat restricted when we consider congruences mod rational prime $p$. 
\begin{Cor}
\label{cor:M}
\begin{enumerate}
\item
If $F_i\in M^n_{k_i}(\Gamma_0(p)\cap \Gamma_1(N),\chi _i )(\mathcal{O}_K)$ satisfies that $F_1\equiv F_2$ mod $p\mathcal{O}_K$ and $F_1\not \equiv 0$ mod $\frak{p}$ for some $\frak{p}$ with $\frak{p}\mid p{\mathcal O}_K$, then we have $\chi _1=\chi _2 \cdot (*/p)^t$ for some $t\in \Z$. Here $(*/p)$ is the unique nontrivial quadratic character mod $p$.  

In particular, if $F_1$ is of quadratic nebentypus, then $F_2$ is of quadratic nebentaypus. 
\item 
If there exist ${\mathcal E}_{k,\chi}\in M^n_{k}(\Gamma_0(p),\chi )(\mathcal{O}_K)$ and $\gamma \in \mathcal{O}_K$ such that ${\mathcal E}_{k,\chi }\equiv \gamma \not \equiv 0$ mod $p\mathcal{O}_K$, then $\chi $ is quadratic. 
\end{enumerate}
\end{Cor}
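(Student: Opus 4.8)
The plan is to derive both parts from Theorem~\ref{thm:M}, reducing part (2) to part (1). For part (2) I would take $F_1:=\mathcal{E}_{k,\chi}$ (weight $k$, nebentypus $\chi$) and $F_2:=\gamma$, viewing the constant $\gamma$ as an element of $M_0^n(\Gamma_0(p)\cap\Gamma_1(N))(\mathcal{O}_K)$, i.e. a form of weight $0$ with trivial nebentypus. Then $F_1\equiv F_2\bmod p\mathcal{O}_K$, and $F_1\not\equiv 0\bmod\mathfrak{p}$ for every $\mathfrak{p}$ not containing $\gamma$ (such $\mathfrak{p}$ exists since $\gamma\not\equiv 0\bmod p\mathcal{O}_K$). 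Applying part (1) with $\chi_1=\chi$, $\chi_2=1$ gives $\chi=(*/p)^t$, which is quadratic. So the work is in part (1).

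For part (1) the first step is to unwind Theorem~\ref{thm:M}. The level is $\Gamma_0(p)$, so $m_i=1$, hence $m=1$ and $p^{m-1}=1$; and $F_1\equiv F_2\bmod p\mathcal{O}_K$ means $F_1\equiv F_2\bmod \mathfrak{P}^{e}$ at each prime $\mathfrak{P}\mid p\mathcal{O}_K$, with $e=e(\mathfrak{P}/p)$. I would apply the theorem with $r=e$, for which $\beta(r)=0$ and the modulus collapses to $p-1$; since $p^{r-1}\equiv 1\bmod(p-1)$, the weight congruence reduces to
\[
k_1-\alpha_1\equiv k_2-\alpha_2 \pmod{p-1},
\]
where $\chi_i=\psi^{\alpha_i}$ and $\psi$ is the character attached to the prime $\mathfrak{p}$ of $\Z[\mu_{p-1}]$ below $\mathfrak{P}$. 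Fixing a generator $\psi_0$ of the character group mod $p$ and writing $\chi_i=\psi_0^{c_i}$, the attached character runs through $\psi_0^{a}$ with $a\in(\Z/(p-1)\Z)^\times$ as $\mathfrak{P}$ varies, and the relation becomes $a^{-1}(c_1-c_2)\equiv k_1-k_2\pmod{p-1}$. The lever is that $k_1-k_2$ is independent of the prime while the coefficient $a^{-1}$ is not.

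The decisive step is to run this at a prime and at its complex conjugate. Complex conjugation sends the prime attached to $\psi_0^{a}$ to the prime attached to $\psi_0^{-a}$, and for $p>3$ these are genuinely distinct primes of $\Z[\mu_{p-1}]$. From the two relations $a^{-1}(c_1-c_2)\equiv k_1-k_2$ and $-a^{-1}(c_1-c_2)\equiv k_1-k_2$, subtraction eliminates the weights and yields $2(c_1-c_2)\equiv 0\pmod{p-1}$. Since $(*/p)=\psi_0^{(p-1)/2}$ is the unique quadratic character, this says precisely $\chi_1\chi_2^{-1}\in\{1,(*/p)\}$, i.e. $\chi_1=\chi_2(*/p)^t$; the ``in particular'' clause is then immediate.

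The main obstacle is securing the non-vanishing hypothesis of Theorem~\ref{thm:M} at the \emph{second} prime: the statement grants $F_1\not\equiv 0\bmod\mathfrak{P}_*$ at only one prime $\mathfrak{P}_*$, whereas the subtraction above also needs the relation at the prime carrying the inverse character. I expect this to be the delicate point. A single prime only pins down $c_1-c_2$ in terms of $k_1-k_2$ and is genuinely insufficient, so the real content lies in propagating non-vanishing. I would exploit that the congruence is modulo the \emph{rational} prime $p$, hence holds simultaneously at every $\mathfrak{P}\mid p$; passing to the complex-conjugate form $\overline{F_1}$ gives $\overline{F_1}\not\equiv 0\bmod\overline{\mathfrak{P}_*}$, and the task is to convert this into the missing relation for the original pair at $\overline{\mathfrak{P}_*}$, equivalently to rule out that $F_1$ and $F_2$ vanish simultaneously modulo $\overline{\mathfrak{P}_*}$. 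An alternative route to the same gap, which also explains the appearance of the quadratic factor, is to argue directly that $F_1\equiv F_2\not\equiv 0\bmod\mathfrak{P}_*$ forces $\chi_1(d)\equiv\chi_2(d)\bmod\mathfrak{P}_*$ through the action of $\Gamma_0(p)\cap\Gamma_1(N)$ on integral $q$-expansions; the quadratic ambiguity $(*/p)^t$ should then reflect exactly the fact that this action at $p$ is integrally normalized only up to a quadratic twist.
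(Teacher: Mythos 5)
Your treatment of part (2) --- taking $F_2$ to be the constant $\gamma$, viewed as a weight-$0$ form of trivial nebentypus --- and your first relation $k_1-k_2\equiv\alpha_1-\alpha_2\pmod{p-1}$, obtained from Theorem~\ref{thm:M} at the given prime, coincide exactly with the paper's proof. But the step you yourself flag as delicate is a genuine gap, and nothing in your proposal closes it. To subtract two relations you must invoke Theorem~\ref{thm:M} for the \emph{original} pair $(F_1,F_2)$ at the conjugate prime, hence you need $F_1\not\equiv 0\bmod\rho(\mathfrak{P}_*)$, which is not granted: for $K=\Q(i)$, $p=5$, $\mathfrak{P}_*=(2+i)$, replacing $(F_1,F_2)$ by $((2-i)F_1,(2-i)F_2)$ preserves every stated hypothesis while forcing $F_1\equiv 0\bmod(2-i)$. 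Your first repair cannot work even in principle: applying the theorem to $(F_1^\rho,F_2^\rho)$ at $\rho(\mathfrak{P}_*)$ replaces the exponents $c_i$ by $-c_i$ and \emph{simultaneously} the attached character $\psi_0^{a}$ by $\psi_0^{-a}$, so the $\alpha_i$ are unchanged and you recover the identical relation $a^{-1}(c_1-c_2)\equiv k_1-k_2$, not its negative. The premise of your alternative route is false as stated: $F_1\equiv F_2\not\equiv 0\bmod\mathfrak{P}_*$ does \emph{not} force $\chi_1(d)\equiv\chi_2(d)\bmod\mathfrak{P}_*$ --- Theorem~\ref{thm:Lang} provides exactly such a congruence $\mathcal{E}_{1,\psi}\equiv\gamma\bmod\mathfrak{p}$ between a form of nebentypus $\psi$ of order $p-1$ and a constant of trivial nebentypus; slashing by elements of $\Gamma_0(p)$ does not act coefficientwise on $q$-expansions, so mod-$\mathfrak{P}$ congruences of Fourier coefficients are not equivariant under the diamond action.

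The device you are missing, which is how the paper produces the second relation, is to change the \emph{forms} rather than the prime: the paper passes to $F_iF_i^\rho\in M^n_{2k_i}(\Gamma_0(p)\cap\Gamma_1(N))(\mathcal{O}_K)$, whose nebentypus is trivial because $\chi_i\chi_i^\rho=1$. Since the modulus $p\mathcal{O}_K$ is $\rho$-stable, $F_1F_1^\rho\equiv F_2F_2^\rho\bmod p\mathcal{O}_K$, and Theorem~\ref{thm:M} (or Theorem~\ref{thm:Ra}) applied at \emph{any} prime where $F_1F_1^\rho\not\equiv 0$ yields $2k_1\equiv 2k_2\pmod{p-1}$ with no character exponents at all, so the identity of the surviving prime becomes irrelevant; combined with the first relation this gives $2(\alpha_1-\alpha_2)\equiv 0\pmod{p-1}$, i.e.\ $\chi_1=\chi_2\cdot(*/p)^t$. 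That said, your instinct about where the difficulty lives is sound, and it is not fully dispatched by the paper either: the claim $F_1F_1^\rho\not\equiv 0\bmod p\mathcal{O}_K$ is asserted there without proof, and it amounts precisely to non-vanishing of $F_1$ at a conjugate \emph{pair} of primes, which does not follow from non-vanishing at the single prime $\mathfrak{p}$. Indeed, $F_1=(2-i)\mathcal{E}_{1,\psi}$ and $F_2=(2-i)\gamma$ (with $p=5$ and $\mathcal{E}_{1,\psi}$, $\gamma$ as in Theorem~\ref{thm:Lang}) satisfy every stated hypothesis, yet $F_1F_1^\rho=5\,\mathcal{E}_{1,\psi}\mathcal{E}_{1,\psi}^\rho\equiv 0\bmod 5$, and the conclusion fails for the order-$4$ character $\psi$; so the Corollary really requires non-vanishing at both $\mathfrak{p}$ and $\rho(\mathfrak{p})$ (automatic in the paper's application, Proposition~\ref{prop:M}, where the relevant form is a theta series with coefficients in $\Z$). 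In any case, the conjugate-product construction is the essential idea absent from your proposal; without it, or some substitute producing the second relation, your argument does not go through.
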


\begin{Rem}
\begin{enumerate}
\item
Actually, in order to prove (1), we can replace the condition $F_1\equiv F_2$ mod $p{\mathcal O}_K$ by a weaker condition $F_1\equiv F_2$ mod $\frak{p}\frak{p}^\rho $. Here $\rho $ is the complex conjugate in ${\rm Aut}(\C)$. See the proof in Subsection \ref{subsec:pr_prop}
\item
From the assertion (2), Remark \ref{rem:Eis} (2) follows.
\end{enumerate}
\end{Rem}

As mentioned in Introduction, the above results provide a mod $p$ analogue of Freitag's result that there are no singular forms (over $\C$) of nonquadratic nebentypus.
We just write down the simplest case. 

For a character $\chi $ mod $p$, we put $\Q(\chi ):=\Q(\chi (a)\:|\:a\in \Z)$. 
Let $\Z[\chi ]$ be the ring of integers in $\Q(\chi)$.  
\begin{Prop}
\label{prop:M}
Let $\chi $ be a character mod $p$ and $F\in M_k^n(\Gamma _0(p),\chi )(\Z[\chi])$. 
Assume that $F$ is strongly mod $p$ singular of $p$-rank $r$, i.e., $r\le n/2$, $a_F(T)\equiv 0$ mod $p\Z[\chi ]$ for all $T\in \Lambda _n$ with ${\rm rank}(T)<r$, and $a_F(T)\not \equiv 0$ mod $p\Z[\chi ]$ for some $T\in \Lambda _n$ with ${\rm rank}(T)=r$.  
Then $\chi $ is a quadratic character. 
\end{Prop}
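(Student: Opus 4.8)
The plan is to reproduce, modulo $p$, Freitag's reason why a singular form cannot have nonquadratic nebentypus: such a form is assembled from theta series, and a theta series carries a quadratic (Kronecker) character. Over $\C$ this is immediate; here $F$ is only \emph{mod $p$} singular, so the substitute for ``$F$ is a theta series'' is ``$F$ is congruent mod $p$ to a theta series,'' and the device that converts the character of the theta series into information about $\chi$ is Corollary~\ref{cor:M}(1). The decisive step is therefore to produce a theta series $\vartheta$, attached to an even positive definite lattice (possibly with a harmonic coefficient), such that $F \equiv \vartheta \bmod{p\Z[\chi]}$.

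To produce $\vartheta$ I would invoke the mod $p$ structure theory of singular forms from \cite{Bo-Ki,Bo-Ki2}, the mod $p$ analogue of Freitag's classification. The hypotheses enter exactly here: because $F$ has $p$-rank $r$ with $r\le n/2$ and $a_F(T)\equiv 0 \bmod{p\Z[\chi]}$ for $\rank(T)<r$, the reduction of $F$ is governed by its rank-$r$ Fourier coefficients, and the condition $r\le n/2$ (i.e. $2r\le n$) is what makes the corresponding degree-$n$ theta series, of rank $2r$ lattices, available to match them. The one classical input I would use without proof is that every such $\vartheta$ has a quadratic nebentypus $\chi_\vartheta$; since the level here is the odd prime $p$, the character $\chi_\vartheta$ is either trivial or the Legendre symbol $(*/p)$.

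Granting the congruence, the conclusion is short. I enlarge $K$ so that $\Q(\mu_{p-1})\subseteq K$ and regard both $F$ and $\vartheta$ as forms for $\Gamma_0(p)$ (that is, $N=1$) over $\mathcal{O}_K$, of characters $\chi$ and $\chi_\vartheta$ respectively and of (possibly different) weights; note that Corollary~\ref{cor:M}(1) imposes no relation between the two weights. Since $a_F(T_0)\not\equiv 0 \bmod{p\Z[\chi]}$ for some $T_0$ of rank $r$, we have $F\not\equiv 0 \bmod{\frak{p}}$ for at least one $\frak{p}\mid p\mathcal{O}_K$, while $F\equiv\vartheta \bmod{p\mathcal{O}_K}$. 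Applying Corollary~\ref{cor:M}(1) with $F_1=F$ and $F_2=\vartheta$ then gives $\chi=\chi_\vartheta\cdot(*/p)^t$ for some $t\in\Z$, so $\chi$ is a product of two quadratic characters and hence quadratic.

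The main obstacle is the congruence $F\equiv\vartheta$ itself. Unlike the complex case, $F$ need not be a theta series — it may carry nonzero full-rank coefficients that merely vanish mod $p$ — so the entire argument must be carried out modulo $p$, and in general one is forced to work with a $\Z[\chi]$-linear combination $\sum_i c_i\vartheta_i$ of theta series rather than a single $\vartheta$. Sorting these by their (finitely many, all quadratic) characters and their level, and extracting from the combination one congruence to which Corollary~\ref{cor:M}(1) can be applied, is the technical heart of the matter; this is also where the restriction to the ``simplest case'' (level exactly $p$, coefficients in $\Z[\chi]$) keeps the bookkeeping manageable.
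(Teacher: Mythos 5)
Your outline coincides with the paper's strategy: invoke the structure theory of mod $p$ singular forms from \cite{Bo-Ki,Bo-Ki2} to get a theta-series congruence, use that theta series carry quadratic characters, and finish with Corollary \ref{cor:M}(1). But the step you yourself label ``the technical heart of the matter'' is exactly the step you do not carry out, and it is a genuine gap. The structure theory does not give $F\equiv \vartheta \bmod p\Z[\chi]$ for a \emph{single} degree-$n$ theta series; in general it gives a linear combination $\sum_i c_i\theta_{S_i}^n$, and the $S_i$ may carry different quadratic characters (some trivial, some $(*/p)$). Such a sum is not a modular form with a nebentypus at all, so the pair $(F,\sum_i c_i\theta_{S_i}^n)$ does not satisfy the hypotheses of Corollary \ref{cor:M}(1), which requires both forms to lie in spaces $M^n_{k_i}(\Gamma_0(p)\cap\Gamma_1(N),\chi_i)$. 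Closing this would need a further device (e.g.\ a mod $p$ projection onto a single character eigenspace, with its own integrality problems), and you supply none.

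The paper closes the gap differently: instead of staying in degree $n$, it descends to degree $r$. Following \cite{Bo-Ki2} Corollary 4.3 and Remark 4.4, the rank-$r$ Fourier coefficients of $F$ (essentially via iterated Siegel $\Phi$-operators) yield a degree-$r$ form $h\in M^r_k(\Gamma_0(p),\chi)(\Z[\chi])$, still of nebentypus $\chi$, together with a \emph{single} $S\in\Lambda^+_r$ such that $\theta^r_S\equiv h\not\equiv 0 \bmod p\Z[\chi]$; Corollary \ref{cor:M}(1) then applies directly to the pair $(h,\theta^r_S)$ and gives $\chi=\chi_S\cdot(*/p)^t$, hence $\chi$ quadratic. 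Two further corrections to your write-up: the theta series matching a form of $p$-rank $r$ are attached to lattices of rank $r$ (forcing $r$ even), not rank $2r$ --- a rank-$2r$ lattice produces a theta series whose coefficients live up to rank $2r$, the wrong support; and accordingly the hypothesis $r\le n/2$ is a requirement of the structure theorem of \cite{Bo-Ki2}, not a condition making ``rank $2r$'' theta series available. Your final paragraph (different weights allowed in Corollary \ref{cor:M}(1); getting $F\not\equiv 0$ mod some $\frak{p}\mid p\mathcal{O}_K$ from the nonvanishing rank-$r$ coefficient) is correct once a single-theta congruence between two genuine nebentypus forms is in hand.
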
 
\begin{Rem}
We may as well consider mod $\frak{p}$ singular modular forms for prime ideal $\frak{p}$ in $\Z[\chi ]$.
Then our method (see Subsection \ref{subsec:pr_prop}) does not work when $\chi $ is nonquadratic character. 
Indeed, if we assume the existence of a modular form 
${\mathcal E}\in M_l^n(\Gamma _0(p),\chi )(\Z[\chi ])$ satisfying ${\mathcal E}\equiv \gamma \not \equiv 0$ mod $\frak{p}$ with $\gamma \in \Z[\chi ]$, we may switch between trivial nebentypus and nebentypus $\chi $ just by multiplication by ${\mathcal E}$. 
To the authors' knowledge, the existence of such ${\mathcal E}$ is not clear at all except degree $1$ and $2$ (see Theorem \ref{thm:Lang} and \cite{Ki} Theorem 1.2). 
For the higher degree case, in an unpublished note, Takemori constructed such ${\mathcal E}$ from Eisenstein series with character when $p$ is a regular prime (see also Remark \ref{rem:Ta}). 
\end{Rem}

%%%%%%%%%%%%%%%%%%%%%%%%%%%%%%%%%%%%%%%%%%%%%%%%%%%%%%%%%%%%%%
\subsection{Proof of Theorem \ref{thm:M}}
We first give two reductions:

\noindent 
{\bf Reduction 1: Decreasing the level to achieve $\boldsymbol {m_i=m_i'}$.}\\
As in the proof of Section 8.2 in \cite{Bo-Ki2} we change $F_i$ mod $\frak{P}^r$ to a form of level 
$\Gamma _0(p^{m_i'})\cap \Gamma _1(N)$ with same nebentypus character and with a weight $k_i'$
congruent to $k_i$ modulo a multiple of $(p-1)p^r$. 
We should keep in mind that $\beta (r)\le r$ always holds. \\

\noindent 
{\bf Reduction 2: Degree ${\boldsymbol 1}$ case is sufficient.}\\
We may apply the method of ``integral extract'' from \cite{Bo-Na}. 
We shortly describe the method (and the modifications necessary). 
As in \cite{Bo-Na} we may associate to $F_i$ ($i=1$, $2$) as above two elliptic modular forms
$f_i\in M_{k_i}(\Gamma _1(p^{m_i}NR^2))({\mathcal O}_K)$ satisfying 
$f_1\equiv f_2$ mod $\frak{P}^r$. 
In \cite{Bo-Na}, we ignored questions concerning nebentypus characters. 
The reduction is achieved by the observation that $f_i$ has indeed nebentypus character $\chi _i$ on 
$\Gamma _0(p)\cap \Gamma _1(NR^2)$.

We sketch the modifications necessary for the constriction of an ``integral extract'' which should keep track of nebentypus characters:
\begin{Prop}
For $F\in M_k^n(\Gamma _0(p^m)\cap \Gamma _1(N),\chi)(K)$ there exists, for any sufficiently large $R\in \mathbb{N}$ an elliptic modular form $f\in M^1_k(\Gamma _0(p^m)\cap \Gamma _1(NR^2),\chi)(K)$ such that 
\begin{itemize}
\item $f$ is an ``integral extract'' of $F$, i.e., the Fourier coefficients are finite sums of Fourier coefficients of $F$,
\item
$\nu_\frak{p}(F)=\nu_\frak{p}(f)$.
\end{itemize}
\end{Prop}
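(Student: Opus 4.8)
The plan is to construct the elliptic modular form $f$ as a restriction of the Siegel modular form $F$ to a suitable embedded upper half-plane, following the ``integral extract'' philosophy of \cite{Bo-Na}, while tracking the behaviour of the nebentypus character under this restriction. Concretely, I would pick a positive definite matrix $S \in \Lambda_n^+$ (or a lattice quadratic form) and consider the pullback $Z \mapsto F(S \cdot z)$ for $z \in \h_1$, where $S \cdot z$ denotes the point $zS \in \h_n$. The key analytic input is the standard fact that such a pullback of a degree-$n$ form of weight $k$ is a degree-$1$ form of weight $k$ (here the factor of automorphy contributes $\det$ of a scalar matrix, which is just a power of the usual $(cz+d)$), and that the Fourier coefficients of the pullback are \emph{finite} $\mathbb{Z}$-linear combinations of the $a_F(T)$, namely $a_f(\ell) = \sum_{T : \Tr(ST) = \ell} a_F(T)$. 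Since $S$ is fixed and $T$ ranges over a positive definite lattice, for each $\ell$ only finitely many $T$ contribute, so this sum is genuinely finite. This immediately gives the first bullet point, that $f$ is an integral extract of $F$.

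\textbf{Tracking the level and character.} The embedding $z \mapsto zS$ is equivariant for a homomorphism from an arithmetic subgroup of $\mathrm{SL}_2(\Z)$ into $\Gamma_n$; a matrix $\smat{a}{b}{c}{d}$ maps to a block matrix whose lower-left block is $cS$ (or a scaled version thereof). I would choose $S$ so that this lower-left block lies in $N R^2 \cdot \sym_n(\Z)$ precisely when $c \equiv 0 \bmod N R^2$, which is what forces the resulting level to be $\Gamma_0(p^m) \cap \Gamma_1(NR^2)$ for the restricted form. The point requiring care is the character: under the embedding, the lower-right block $D$ of the image matrix has determinant a suitable power of $d$ (times a unit coming from $S$), so the transformation law $(F|_k \gamma)(Z) = \chi(\det D) F(Z)$ pulls back to $f(\gamma_0 z) = \chi(d^{\,?}) (cz+d)^k f(z)$, and I must verify that the resulting character on $\Gamma_0(p^m) \cap \Gamma_1(NR^2)$ is exactly $\chi$ and not some power or twist of it. This is the modification over \cite{Bo-Na}, where nebentypus was ignored: one checks that the $\Gamma_0(p^m)$-part of the embedded matrix carries $\det D$ to $d$ modulo the relevant unit, so that $\chi(\det D)$ becomes $\chi(d)$ as desired.

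\textbf{Preserving the $\frak{p}$-valuation.} The delicate point is the second bullet, the equality $\nu_\frak{p}(F) = \nu_\frak{p}(f)$. The inequality $\nu_\frak{p}(f) \ge \nu_\frak{p}(F)$ is immediate, since every $a_f(\ell)$ is a $\Z$-linear combination of the $a_F(T)$ and $\nu_\frak{p}$ is a valuation. For the reverse inequality $\nu_\frak{p}(f) \le \nu_\frak{p}(F)$, I would argue that one can choose $S$ (and hence $R$, taken sufficiently large) so that no unwanted $\frak{p}$-adic cancellation occurs among the terms $a_F(T)$ summing to a given $a_f(\ell)$. The standard device is to separate the contributions by taking several matrices $S$ or by varying $S$ so that a $T$ realizing $\nu_\frak{p}(F) = \nu_\frak{p}(a_F(T_0))$ appears \emph{isolated} in some pullback, i.e.\ it is the unique $T$ with $\Tr(S T) = \Tr(S T_0)$, forcing $a_f(\Tr(S T_0)) = a_F(T_0)$ and hence $\nu_\frak{p}(f) \le \nu_\frak{p}(a_F(T_0)) = \nu_\frak{p}(F)$. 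This is exactly where ``sufficiently large $R$'' enters: enlarging $R$ enlarges the available pool of admissible $S$ and lets us separate the finitely many competing $T$ by their traces against $S$.

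\textbf{The main obstacle.} I expect the hard part to be the isolation argument guaranteeing $\nu_\frak{p}(f) \le \nu_\frak{p}(F)$, that is, ruling out systematic $\frak{p}$-adic cancellation in the finite sums defining the $a_f(\ell)$. One must show that for \emph{some} choice of $S$ the coefficient of minimal valuation survives; the obstruction is that for a single fixed $S$ there could be accidental cancellation modulo $\frak{p}$, so the real content is a counting/genericity statement over the choice of $S$ (equivalently, over $R$). The character-bookkeeping of the previous paragraph is routine once the embedding is written down explicitly, but it is the step most easily gotten wrong, so I would state the embedding of $\mathrm{SL}_2$ into $\Gamma_n$ carefully and compute $\det D$ of the image matrix directly to confirm the character descends correctly.
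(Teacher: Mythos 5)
There is a genuine gap, and it is not the one you flagged. Your construction $f(z)=F(zS)$ cannot prove the Proposition as stated, because the pullback along $z\mapsto zS$ preserves neither the weight nor the nebentypus. The relevant embedding is $\smat{a}{b}{c}{d}\mapsto\smat{a1_n}{bS}{cS^{-1}}{d1_n}$, and for $Z=zS$ one has $CZ+D=(cz+d)1_n$, so the automorphy factor is $\det(CZ+D)^k=(cz+d)^{nk}$, while $\det D=d^n$. Thus your $f$ has weight $nk$ and character $\chi^n$; your own parenthetical remark that the factor is ``a power of the usual $(cz+d)$'' concedes exactly this, yet you then assert weight $k$, and the claim that the embedding ``carries $\det D$ to $d$'' is false. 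This is fatal for the intended use: the Proposition asserts $f\in M^1_k(\Gamma_0(p^m)\cap\Gamma_1(NR^2),\chi)(K)$, and in the proof of Theorem \ref{thm:M} these extracts are fed into Theorem \ref{thm:Ra} to deduce congruences for the weights $k_i$ themselves, with the characters $\chi_i$ (hence the exponents $\alpha_i$) kept intact; replacing $(k_i,\chi_i)$ by $(nk_i,\chi_i^n)$ destroys the conclusion, since one cannot divide a congruence modulo $(p-1)p^{\beta(r)}$ by $n$.

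The paper's construction is different and avoids both defects. One chooses $T_0$ with $\nu_{\frak{p}}(a_F(T_0))=\nu_{\frak{p}}(F)$ and forms
\[
F^{(R,T_0)}=R^{-n(n+1)/2}\sum_{J\bmod R}F|_k\,\mat{1_n}{\frac{J}{R}}{0}{1_n}\cdot e^{-2\pi i\,\mathrm{tr}(T_0J/R)},
\]
which extracts the subseries of Fourier coefficients with $T\equiv T_0\bmod R$; the degree is then lowered by the weight-preserving device of \cite{Bo-Na} (restriction to $\mathrm{diag}(z,Z_2)$ followed by extraction of a Fourier coefficient in $Z_2$: for the block-diagonally embedded ${\rm SL}_2$ one has $\det(CZ+D)=cz+d$ and $\det D=d$, so weight $k$ and character $\chi$ survive). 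For large $R$, the congruence $T\equiv T_0\bmod R$ combined with the bounds forced by positive semidefiniteness makes $T_0$ the \emph{only} term contributing to its coefficient, so $\nu_{\frak{p}}(f)=\nu_{\frak{p}}(F)$ holds automatically --- precisely the ``isolation'' step that you identify as the main obstacle and leave as an unproven genericity claim over $S$. Note finally that the role of $R$ is not what you describe: the level $NR^2$ arises from the conjugation $\mat{1_n}{\frac{J}{R}}{0}{1_n}\mat{A}{B}{C}{D}=X\mat{1_n}{\frac{J}{R}}{0}{1_n}$, where $X$ (with lower-left block $C$ and lower-right block $D-CJ/R\equiv D\bmod p^m$, so the character value $\chi(\det D)$ is unchanged) lies in the required group only when $C\equiv 0_n\bmod R^2$; ``sufficiently large $R$'' has nothing to do with enlarging a pool of matrices $S$.
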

\begin{proof}
We use the construction from \cite{Bo-Na} and show that it satisfies the necessary transformation properties for $\Gamma_0(p^m)\cap \Gamma_1(NR^2)$. 
The properties for the smaller group $\Gamma _1(p^mNR^2)$ hold anyway. 
We recall from \cite{Bo-Na} that $f$ is obtained (for a suitable $T_0$) from 
\[F^{(R,T_0)}=\frac{1_n}{R^{\frac{n(n+1)}{2}}}\sum _J F|_k\; \mat{1_n}{\frac{J}{R}}{0}{1_n}\cdot e^{-2\pi i {\rm tr}(\frac{T_0\cdot J}{R})}, \]
where $J$ runs over all symmetric integral matrices mod $R$ of size $n$. 
Let $\smat{A}{B}{C}{D}\in \Gamma _0(p^m)\cap \Gamma _1^o(NR^2)$. 
Then, for any integral symmetric matrices $J$ of size $n$ we have 
\[\mat{1_n}{\frac{J}{R}}{0}{1_n}\mat{A}{B}{C}{D}=X\cdot \mat{1_n}{\frac{J}{R}}{0}{1_n}\]
with 
\[X=\mat{A+\frac{J}{R}}{-\frac{AJ}{R}-\frac{JCJ}{R^2}+B+\frac{JD}{R}}{C}{-\frac{CJ}{R}+D}.\]
Then $X\in \Gamma _0(p^m)\cap \Gamma _1(NR^2)$ provided that $C\equiv 0_n$ mod $R^2$; 
note that $A\equiv 1_n\equiv D$ mod $R$. 
The assertion follows in a standard way. 
\end{proof}
\begin{Prop}
Let $\chi _i$ be two Dirichlet characters mod $p^{m_i}$ and $F_i\in M_{k_i}^n(\Gamma _0(p^{m_i})\cap \Gamma _1(N),\chi_i)({\mathcal O}_K)$. 
Suppose that $F_1\equiv F_2$ mod $\frak{p}^r$ and $F_1\not \equiv 0$ mod $\frak{p}$ for a prime ideal $\frak{p}$ in ${\mathcal O}_K$. 
Then for sufficiently large $R$ with $p\nmid R$, there exist integral extracts $f_i\in M_{k_i}^1(\Gamma _0(p^{m_i})\cap \Gamma _1(N),\chi_i)({\mathcal O}_K)$ such that 
$f_1\equiv f_2$ mod $\frak{p}^r$ with $f_1\not \equiv 0$ mod $\frak{p}$. 
\end{Prop}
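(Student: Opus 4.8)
The plan is to realise both $f_1$ and $f_2$ through one and the same linear ``extract'' operator, so that the congruence is transported from $(F_1,F_2)$ to $(f_1,f_2)$ for free by linearity, while the non-vanishing of $f_1$ is delivered by the preceding Proposition. First I would apply the preceding Proposition to $F_1$ alone. Since $F_1\not\equiv 0$ mod $\frak{p}$ we have $\nu_\frak{p}(F_1)=0$, so that Proposition furnishes, for any sufficiently large $R$, an integral extract $f$ of $F_1$ with $\nu_\frak{p}(f)=\nu_\frak{p}(F_1)=0$. Among these $R$ I would choose one with $p\nmid R$ (still possible, as ``sufficiently large'' leaves infinitely many $R$ in each residue class prime to $p$); this fixes the data $R$, $T_0$ and the one-variable specialisation of \cite{Bo-Na} that define the construction
\[
F^{(R,T_0)}=\frac{1}{R^{n(n+1)/2}}\sum_J F|_k\,\mat{1_n}{J/R}{0}{1_n}\,e^{-2\pi i\,{\rm tr}(T_0 J/R)} .
\]
Denote by $\mathcal{L}$ the resulting operator on Fourier expansions and set $f_1:=\mathcal{L}(F_1)$; then $\nu_\frak{p}(f_1)=0$, i.e. $f_1\not\equiv 0$ mod $\frak{p}$.

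Next I would apply the \emph{same} operator $\mathcal{L}$ to $F_2$ and put $f_2:=\mathcal{L}(F_2)$. Here I would first record that, once $R$, $T_0$ and the specialisation are fixed, $\mathcal{L}$ is independent of its input: carrying out the sum over $J$ gives $F^{(R,T_0)}=\sum_{T\equiv T_0\,(R)}a_F(T)q^T$, the factor $R^{-n(n+1)/2}$ cancelling exactly, so that after the one-variable restriction the $\ell$-th Fourier coefficient of $\mathcal{L}(F)$ is a \emph{fixed} finite sum $\sum_{T\in S(\ell)}a_F(T)$ whose index sets $S(\ell)$ depend only on $R$, $T_0$ and the specialisation. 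In particular $\mathcal{L}$ is $\mathcal{O}_K$-linear and maps $\frak{p}$-integral forms to $\frak{p}$-integral forms (this is where $p\nmid R$ enters, so that the denominator is a $\frak{p}$-unit). By the two preceding Propositions each $f_i$ then has weight $k_i$, nebentypus $\chi_i$ and the transformation behaviour for $\Gamma_0(p^{m_i})\cap\Gamma_1(NR^2)$, with coefficients in $\mathcal{O}_K$.

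It remains to transport the congruence, which is immediate from linearity: $f_1-f_2=\mathcal{L}(F_1)-\mathcal{L}(F_2)=\mathcal{L}(F_1-F_2)$. Each Fourier coefficient of $\mathcal{L}(F_1-F_2)$ is the finite sum $\sum_{T\in S(\ell)}a_{F_1-F_2}(T)$, and every summand satisfies $\nu_\frak{p}(a_{F_1-F_2}(T))\ge r$ because $F_1\equiv F_2$ mod $\frak{p}^r$; hence $\nu_\frak{p}(f_1-f_2)\ge r$, that is $f_1\equiv f_2$ mod $\frak{p}^r$. Combined with $f_1\not\equiv 0$ mod $\frak{p}$, this is exactly the assertion.

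The only genuine subtlety — and the point I would be most careful about — is that the extract realising the \emph{equality} $\nu_\frak{p}(f_1)=\nu_\frak{p}(F_1)$ is tailored to $F_1$, its parameters $R,T_0$ being chosen so as to isolate a single Fourier coefficient of $F_1$ of minimal valuation; one must therefore reuse this \emph{same} $\mathcal{L}$ for $F_2$ rather than re-optimise. The congruence then survives automatically, because the mere inequality $\nu_\frak{p}(\mathcal{L}(G))\ge\nu_\frak{p}(G)$ holds for any integral extract and places no constraint on $R,T_0$, whereas the sharp equality needed for non-vanishing is precisely what the preceding Proposition guarantees for $F_1$. No new estimate beyond this compatibility is needed.
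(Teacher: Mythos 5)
Your proof is correct and is essentially the argument the paper intends: the paper states this Proposition without a separate proof, as an immediate consequence of the preceding Proposition's construction, and your write-up---apply the extract operator with the $(R,T_0)$ adapted to $F_1$ to \emph{both} forms, use $\mathcal{O}_K$-linearity and the fact that the extract's coefficients are fixed finite sums of the $a_{F_i}(T)$ to transport the congruence mod $\frak{p}^r$, and invoke the valuation equality $\nu_{\frak{p}}(f_1)=\nu_{\frak{p}}(F_1)=0$ only for $F_1$---is exactly that reasoning. The one point worth flagging is cosmetic: the $f_i$ naturally live on $\Gamma_0(p^{m_i})\cap\Gamma_1(NR^2)$ (as you say), consistent with the preceding Proposition, rather than on the group $\Gamma_1(N)$ written in the statement.
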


\begin{proof}[Proof of Theorem \ref{thm:M}]
It suffices to prove the case of degree $1$ by Reduction 2. 
We may assume that $m_i$ is conductors of $\chi _i$ because of Reduction 1. 

Suppose that $f_i\in M^1_{k_i}(\Gamma_0(p^{m})\cap \Gamma_1(N),\chi _i )(\mathcal{O}_K)$, $f_1\equiv f_2$ mod $\frak{P}^r$ and $f_1\not \equiv 0$ mod $\frak{P}$. 
Then we can find some $\gamma \in {\mathcal O}_K$ with $\gamma \not \in \frak{P}$ such that
\begin{align*}
&\gamma f_i^{p^{m-1}}\{(E_{1,\psi })^{p-1-\alpha _i}\}^{p^{r-1}}\in M_{k_i\cdot p^{m-1}+(p-1-\alpha _i) p^{r-1}}(\Gamma _0^{(n)}(p^{m})\cap \Gamma_1(N))({\mathcal O}_K)\\
&\text{and}\quad \gamma f_1^{p^{m-1}}\{(E_{1,\psi })^{p-1-\alpha _1}\}^{p^{r-1}}\equiv \gamma f_2^{p^{m-1}}\{(E_{1,\psi })^{p-1-\alpha _2}\}^{p^{r-1}} \bmod{\frak{P}^r}. 
\end{align*} 
Note here that $\gamma f_1^{p^{m-1}}\{(E_{1,\psi })^{p-1-\alpha _1}\}^{p^{r-1}}\not \equiv 0$ mod $\frak{P}$. 
It follows from Theorem \ref{thm:Ra} that  
\[k_1\cdot p^{m-1}-\alpha _1\cdot p^{r-1}\equiv k_2\cdot p^{m-1}-\alpha _2\cdot p^{r-1} \bmod{(p-1)p^{\beta (r)}}.\] 
This completes the proof of Theorem \ref{thm:M}.
\end{proof}

%%%%%%%%%%%%%%%%%%%%%%%%%%%%%%%%%%%%%%%%%%%%%%%%%%%%%%%%%%%%%%
\subsection{Proofs of Corollary \ref{cor:M} and Proposition \ref{prop:M}}
\label{subsec:pr_prop}
\begin{proof}[Proof of Corollary \ref{cor:M}]
(1) Let $\rho $ be the complex conjugate in ${\rm Aut}(\C)$. 
Taking sufficiently large field $K$, we may assume that $K^{\rho}=K$. 

For the prime ideal $\frak{p}$ with $F_1\not \equiv 0$ mod $\frak{p}$, 
we take the character $\psi $ corresponding $\frak{p}$ and $\alpha _i$ with $0\le \alpha _i \le p-2$ such that $\chi _i=\psi ^{\alpha _i}$.  
The condition $F_1\equiv F_2\not \equiv 0$ mod $\frak{p}$ implies $k_1-k_2\equiv \alpha _1-\alpha _2$ mod $p-1$, because of Theorem \ref{thm:M}. 
On the other hand, we have
\begin{align*}
F_i\cdot F_i^\rho \in &M^n_{2k_i}(\Gamma_0(p)\cap \Gamma_1(N))(\mathcal{O}_K),
\quad F_1\cdot F_1^\rho \equiv F_2\cdot F_2^\rho \bmod{p\mathcal{O}_K}\\
&~~~~~~~~~~~~~~~~~\text{and}\quad  F_1\cdot F_1^\rho \not \equiv 0 \bmod{p{\mathcal O}_K}. 
\end{align*}
Taking a prime ideal $\frak{p}'$ with $\frak{p}'\mid p{\mathcal O}_K$ such that $F_1\cdot F_1^\rho \not \equiv 0$ mod $\frak{p}'$, and applying Theorem \ref{thm:M} (or Theorem \ref{thm:Ra}) to $F_i\cdot F_i^\rho $, we have $2k_1-2k_2\equiv 0$ mod $p-1$. 
These two congruences imply that $\alpha _1-\alpha _2\equiv 0$ mod $\frac{p-1}{2}$. 
We put $\alpha _1=\alpha _2+\frac{p-1}{2}\cdot t$ with $t\in \Z$. 
This shows that 
\begin{align*}
\chi_1=\psi ^{\alpha _1}=\psi ^{\alpha _2+\frac{p-1}{2}\cdot t}=\chi_2\cdot \psi ^{\frac{p-1}{2}\cdot t}=\chi_2\cdot \left(*/p\right)^t. 
\end{align*}
We obtain the claim of (1). \\
(2) The claim follows immediately from (1).   
\end{proof}
\begin{proof}[Proof of Proposition \ref{prop:M}]
In the same way as in \cite{Bo-Ki2} Corollary 4.3 and Remark 4.4, we can find $S\in \Lambda _{r}^+$ and $h\in M^r_k(\Gamma _0(p),\chi )(\Z[\chi ])$ such that $\theta _S^r\equiv h\not \equiv 0$ mod $p\Z[\chi ]$. 
Here, for $S\in \Lambda _{m}^+$ ($m$ even), $\theta _S^n\in M_{m/2}^n(\Gamma _0(L),\chi_S)$ denotes the theta series 
\begin{align}
\label{eq:theta}
\theta _S^{n}(Z):=\sum _{X\in \Z^{m,n}}e^{2\pi i({\rm tr}(S[X]Z))}\quad (Z\in \hh_{n}), 
\end{align}
$L$ is the level of $S$, $\chi _S$ is the associated quadratic character of $S$, and $\Z^{m,n}$ is the set of  $m\times n$ matrices with integral components. 
By Corollary \ref{cor:M} (1), this can not happen, unless $\chi $ is quadratic. 
\end{proof}
%%%%%%%%%%%%%%%%%%%%%%%%%%%%%%%%%%%%%%%%%%%%%%%%%%%%%%%
\section{Eisenstein series of nonquadratic nebentypus}
\label{sec:4}
We take a prime ideal $\frak{p}_i$ in $\Z[\mu_{p-1}]$ and $q_i(X)=X-d_i\in \Z[X]$ from Subsection \ref{Char}.  
Let $\omega $ be the Teichm\"uller character on $\Z_p$. 
Then an embedding from $\Q(\mu _{p-1})$ to $\Q_p$ corresponding to $\frak{p}_i$
is determined by $\sigma (\zeta _{p-1})=\omega (d_i)$. 
Let $\chi :(\Z/p\Z)^\times \rightarrow \Q(\mu _{p-1})$ be a character mod $p$. 
Then we can find $\alpha $ with $0\le \alpha \le p-2$ such that $\chi ^\sigma =\omega ^\alpha $. 
We remark that $\chi =\psi _i^{-\alpha }$ holds for the character $\psi _i$ corresponding $\frak{p}_i$, 
since $\omega (d_i)\equiv d_i\equiv (\psi _i^{-1}(d_i))^\sigma $ mod $p$.

Suppose that $\chi $ is nonquadratic primitive character mod $p$. 
For a positive integer $k$ with $k>n+1$,  
let $E^n_{k}=\sum _Ta_{k}(T)q^T$ be the Eisenstein series of weight $k$ and degree $n$ for $\Gamma _n$, and $E^n_{k,\chi }=\sum _Ta_{k,\chi}(T)q^T$ the Eisenstein series of weight $k$ and degree $n$ for $\Gamma _0(p)$ with character $\chi $ which studied by Takemori \cite{Ta}. Here we assumed that $\chi(-1)=(-1)^k$ for $E^n_{k,\chi}$. 
In stead of giving the direct definition of $E^n_{k,\chi}$, we just write down its Fourier expansion below. 

Suppose that ${}^tUTU={\rm diag}(T',0_{n-r})$ with $T'\in \Lambda _{r}^+$ for some $U\in {\rm GL}_n(\Z)$. 
Recall from Katsurada \cite{Kat} and Takemori \cite{Ta} that   
\begin{align*}
&a_{k}(T)=2^{[(r+1)/2]}L(1-k,{\boldsymbol 1})^{-1}\left(\prod _{i=1}^{[r/2]}L(1+2i-2k,{\boldsymbol 1})^{-1}\right)\\
&~~~~~\times \prod _{q} F_q(T',q^{k-r-1})\times 
\begin{cases}
1 \quad \text{if}\ r\ \text{odd},\\
L(1+r/2-k,\chi _{T'})  \quad \text{if}\ r\ \text{even},
\end{cases}
\end{align*}
\begin{align}
\label{eq:FC}
&a_{k,\chi}(T) =2^{[(r+1)/2]}L^{(p)}(1-k,\chi )^{-1}\left(\prod _{i=1}^{[r/2]}L^{(p)}(1+2i-2k,\chi ^2)^{-1}\right) \\
&~~~~~\times \prod _{q\neq p} F_q(T',\chi (q)q^{k-r-1})\times 
\begin{cases}
1 \quad \text{if}\ r\ \text{odd},\\
L^{(p)}(1+r/2-k,\chi _{T'}\chi )  \quad \text{if}\ r\ \text{even}. 
\end{cases}
\end{align}
Here we put $L^{(p)}(s,\psi):=(1-\psi (p)p^{-s})L(s,\psi)$, ${\boldsymbol 1}$ is the trivial character mod $1$, 
and $F_q(T,X)\in \Z[X]$ is a polynomial with constant term $1$ calculated by Katsurada \cite{Kat}. 

Let $L_p(s,\psi )$ be the Kubota--Leopoldt $p$-adic $L$-function. 
Then $L_p(s,\psi )$ is a $p$-adic meromorphic (analytic if $\psi \neq {\boldsymbol 1}$) function on $\Z_p$ satisfying 
\begin{align}
\label{eq:p-adicL}
L_p(1-s,\psi )=-(1-\psi \omega ^{-s}(p)p^{s-1})
\frac{B_{s,\psi \omega ^{-s}}}{s}\quad (s\in \mathbb{Z}_{\ge 1}),  
\end{align}
and $L_p(s,{\boldsymbol 1})$ is analytic except for a pole at $s=1$.  
Here we regard as $\psi \omega ^{-s}={\boldsymbol 1}$ when $\psi =\omega ^s$.
On these facts, see Washington \cite{Wa}, p.57.

Then we have
\begin{align}
\label{eq:Lp}
L^{(p)}(1-s,\psi \omega ^{-s})=L_p(1-s,\psi)
\end{align}
for any $s\in \mathbb{Z}_{\ge 1}$.

%%%%%%%%%%%%%%%%%%%%%%%%%%%%%%%%%%%%%%%%%%%%%%%%%%%%%%%%%%%%%%%%%
\subsection{Mod $p$-power singular forms of nonquadratic nebentypus}
\begin{Thm}
\label{thm:sing}
Let $l$ be a natural number and $p$ a regular prime with $p>2l+1$.  
We take $\alpha \in \Z$ with $1\le \alpha \le p-2$ ($\alpha \not \equiv 0$ mod $\frac{p-1}{2}$) such that $\chi ^\sigma =\omega ^\alpha $. 
Let $\{l_{\delta, m}\}$ ($\delta =0$, $1$) be two sequences of positive integers such that $l+\frac{p-1}{2^\delta }$ is even and 
\[l_{\delta, m}\to \left(l,l-\alpha +\frac{p-1}{2^\delta }\right) \in {\boldsymbol X},\quad l_{\delta,m}\to \infty \quad (m\to \infty ). \] 
Then we have 
\[\lim _{m\to \infty }(E^n_{l_{\delta, m},\chi})^{\sigma }=\widetilde{E}^n_{(l,l+(p-1)/2^\delta )}. \]
Here $\widetilde{E}^n_{(l,l+(p-1)/2^\delta )}$ is the $p$-adic Eisenstein series studied in the authors \cite{Bo-Ki3}. 
\end{Thm}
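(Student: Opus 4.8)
The plan is to reduce the asserted identity of $p$-adic Eisenstein series to a coefficientwise $p$-adic limit and then to evaluate every factor of the Fourier coefficient formula \eqref{eq:FC} under $\sigma$, using the interpolation property \eqref{eq:Lp} and the $p$-adic continuity of the Kubota--Leopoldt $L$-function. First I would recall that in the framework of \cite{Bo-Ki3} convergence of a sequence of Siegel modular forms amounts to coefficientwise $p$-adic convergence of Fourier coefficients; thus it suffices to show, for each $T$ with ${}^tUTU={\rm diag}(T',0_{n-r})$, that $\lim_{m\to\infty}\sigma\bigl(a_{l_{\delta,m},\chi}(T)\bigr)$ equals the corresponding Fourier coefficient of $\widetilde{E}^n_{(l,l+(p-1)/2^\delta)}$. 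Writing $k=l_{\delta,m}$, the hypothesis $l_{\delta,m}\to(l,l-\alpha+(p-1)/2^\delta)\in{\boldsymbol X}$ means $k\to l$ in $\Z_p$ (the wild part) while $k\equiv l-\alpha+(p-1)/2^\delta\pmod{p-1}$ is eventually constant (the tame part), so every Teichm\"uller power occurring below stabilizes for large $m$.

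Next I would treat the three kinds of factors separately. For the finite Euler factors $\prod_{q\neq p}F_q(T',\chi(q)q^{k-r-1})$, applying $\sigma$ replaces $\chi(q)$ by $\omega^\alpha(q)$; since $q\in\Z_p^\times$ and $F_q\in\Z[X]$, the argument
\[
\omega^\alpha(q)\,q^{k-r-1}=\omega(q)^{\alpha+k}\,\langle q\rangle^{k}\,q^{-r-1}
\]
converges $p$-adically (the wild factor $\langle q\rangle^{k}$ by continuity of $x\mapsto\langle q\rangle^{x}$, the tame factor because $\alpha+k\bmod(p-1)$ stabilizes), its limiting tame exponent being $(l-\alpha+(p-1)/2^\delta)+\alpha=l+(p-1)/2^\delta$. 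For the $L$-value factors I would rewrite each $L^{(p)}$ so that \eqref{eq:Lp} applies: from $\chi^\sigma=\omega^\alpha$ one gets $L^{(p)}(1-k,\chi)^\sigma=L^{(p)}(1-k,\omega^{\alpha+k}\omega^{-k})=L_p(1-k,\omega^{\alpha+k})$, and likewise $L^{(p)}(1+2i-2k,\chi^2)^\sigma=L_p\bigl(1-(2k-2i),\omega^{2\alpha+2k-2i}\bigr)$ and, for $r$ even, $L^{(p)}(1+r/2-k,\chi_{T'}\chi)^\sigma=L_p\bigl(1-(k-r/2),\chi_{T'}\omega^{\alpha+k-r/2}\bigr)$, the quadratic character $\chi_{T'}$ being fixed by $\sigma$. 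Since the characters here are eventually constant and $L_p(s,\cdot)$ is $p$-adically analytic for a nontrivial character (see \eqref{eq:p-adicL} and \cite{Wa}), each factor converges as $k\to l$.

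I would then match these limits with the Fourier coefficients of $\widetilde{E}^n_{(l,l+(p-1)/2^\delta)}$ recorded in \cite{Bo-Ki3}: in every factor the argument of $L_p$ tends to a value governed by the wild coordinate $l$, while the attached Teichm\"uller character is governed by the tame coordinate, which the twist by $\chi=\psi^{-\alpha}$ shifts from $l-\alpha+(p-1)/2^\delta$ up to $l+(p-1)/2^\delta$; e.g.\ $\omega^{\alpha+k}\to\omega^{l+(p-1)/2^\delta}$, $\omega^{2\alpha+2k-2i}\to\omega^{2(l+(p-1)/2^\delta)-2i}$, and $\chi_{T'}\omega^{\alpha+k-r/2}\to\chi_{T'}\omega^{(l+(p-1)/2^\delta)-r/2}$. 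This is exactly the pair $(l,l+(p-1)/2^\delta)$. At this point I would invoke the parity hypothesis that $l+(p-1)/2^\delta$ is even to guarantee $\chi(-1)=(-1)^{k}$ for all $m$, so that each $E^n_{l_{\delta,m},\chi}$ is a genuine holomorphic Eisenstein series (classicality being ensured by $l_{\delta,m}\to\infty$, hence $l_{\delta,m}>n+1$).

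The main obstacle will be the final matching together with control of poles and integrality. The trivial-character companion involves $L_p(s,{\boldsymbol 1})$, which has a pole at $s=1$, so I must verify, using $\alpha\not\equiv 0\bmod\frac{p-1}{2}$ and the restriction $p>2l+1$, that none of the limiting arguments $1-l$, $1-(2l-2i)$, $1-(l-r/2)$ meets a pole and that the attached characters stay nontrivial there; this is where $p>2l+1$ confines all weights to a safe range. The regularity of $p$ is what guarantees that the relevant $p$-adic $L$-values are $p$-adic units, so that the Fourier coefficients have bounded denominators, the coefficientwise limit defines a genuine nonzero $p$-adic modular form, and it can be identified with $\widetilde{E}^n_{(l,l+(p-1)/2^\delta)}$ of \cite{Bo-Ki3}.
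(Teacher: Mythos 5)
Your proposal follows essentially the same route as the paper: evaluate the $\sigma$-image of each factor in \eqref{eq:FC} using the interpolation identity \eqref{eq:Lp} and the exponent bookkeeping $\alpha+l_{\delta,m}\equiv l+(p-1)/2^\delta \bmod{(p-1)}$, handle the Euler factors $F_q$ by a congruence argument, use regularity of $p$ for the inverted $L$-factors, and identify the limit with $\widetilde{E}^n_{(l,l+(p-1)/2^\delta)}$ (the paper does this last step by running the identical computation for an auxiliary level-one sequence $E^n_{k_{\delta,m}}$ whose limit is $\widetilde{E}^n_{(l,l+(p-1)/2^\delta)}$ by \cite{Bo-Ki3}, which amounts to the same matching you describe). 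However, there is a genuine gap at your very first reduction: you assert that convergence of Siegel modular forms in this framework ``amounts to coefficientwise $p$-adic convergence of Fourier coefficients.'' It does not. Convergence means $\nu_p$ of the difference tends to infinity, i.e.\ convergence \emph{uniform over all} $T\in\Lambda_n$, and this is strictly stronger than coefficientwise convergence here because the coefficient formula involves a different character $\chi_{T'}$, hence a different $p$-adic $L$-function, and a different finite Euler product for each $T$. The paper devotes a separate step to exactly this point: when the conductor of $\chi_{T'}\omega^{l+(p-1)/2^\delta-r/2}$ is divisible by a prime $q\neq p$, it writes $L_p(s,\chi_{T'}\omega^{l+(p-1)/2^\delta-r/2})=\Phi_{T'}((1+p)^s-1)$ with $\Phi_{T'}(X)\in\Z_p[[X]]$ (Washington \cite{Wa}, Theorem 7.10), which gives an estimate of the variation of these $L$-values that is independent of $T'$; combined with the congruence \eqref{eq:Fq}, whose modulus $p^{b(m)+1}$ is independent of $T$ and $q$, this yields uniformity. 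Without an argument of this kind your proof only establishes coefficientwise convergence and does not prove the stated limit.

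Two further points are off the mark, though less serious. First, the role of regularity is not to make the relevant $L$-values ``$p$-adic units'' or to bound denominators; it is precisely the nonvanishing $L_p(1+2i-2l,\omega^{2l-2i})\neq 0$, which is what makes the \emph{inverted} factors $L^{(p)}(1+2i-2l_{\delta,m},\chi^2)^{-1}$ converge. Second, your plan to ``verify that none of the limiting arguments meets a pole and that the attached characters stay nontrivial'' cannot be carried out as stated: for $r\ge 2l$ the factor with $i=l$ has trivial character and limiting argument $s=1$, the pole of $L_p(s,{\boldsymbol 1})$. This is not an accident to be excluded but the mechanism that makes the limit mod $p$ singular of rank $2l$; what one actually checks is that these pole-hitting factors occur inverted (hence tend to $0$), and that at rank exactly $2l$ the surviving coefficients come from the compensation with the non-inverted factor $L_p(1+r/2-l_{\delta,m},\chi_{T'}\omega^{l+(p-1)/2^\delta-r/2})$ when $\chi_{T'}=(*/p)^\delta$.
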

\begin{proof}
We may assume that $l_{\delta ,m}=l+a(m)p^{b(m)}$ with $a(m)\equiv -\alpha +(p-1)/2^\delta $ mod $p-1$, $b(m)\to \infty $ ($m\to \infty$). 
Recall that   
\begin{align*}
&a_{l_{\delta ,m},\chi}(T)^\sigma =2^{[(r+1)/2]}L^{(p)}(1-l_{\delta ,m},\omega ^\alpha )^{-1}\left(\prod _{i=1}^{[r/2]}L^{(p)}(1+2i-2l_{\delta ,m},\omega ^{2\alpha })^{-1}\right) \\
&~~~~~\times \prod _{q\neq p} F_q(T',\omega ^{\alpha }(q)q^{l_{\delta ,m}-r-1})\times 
\begin{cases}
1 \quad \text{if}\ r\ \text{odd},\\
L^{(p)}(1+r/2-l_{\delta ,m},\chi _{T'}\omega ^\alpha )  \quad \text{if}\ r\ \text{even}. 
\end{cases}
\end{align*}

We consider the case where $s=l_{\delta ,m}$ and $\psi \omega ^{-s}=\omega ^\alpha $ in the formula (\ref{eq:Lp}). 
Then we have $\psi =\omega ^{\alpha +l_{\delta ,m}}=\omega ^{l+(p-1)/2^\delta }$ and therefore
\begin{align*}
L^{(p)}(1-l_{\delta ,m},\omega ^\alpha )=L_p(1-l_{\delta ,m},\omega ^{l+(p-1)/2^\delta }). 
\end{align*}
It follows that 
\begin{align*}
\lim _{m\to \infty }L_p(1-l_{\delta ,m},\omega ^{l+(p-1)/2^\delta }) =L_p(1-l,\omega ^{l+(p-1)/2^\delta })=L^{(p)}\left(1-l,(q/p)^\delta \right). 
\end{align*}
This shows 
\begin{align*}
\lim _{m\to \infty } L^{(p)}(1-l_{\delta ,m},\omega ^\alpha )=L^{(p)}\left(1-l,(q/p)^\delta \right). 
\end{align*}

Similarly, we have the following $p$-adic limits. 
\begin{align*}
&\lim_{m\to \infty }L^{(p)}(1+2i-2l_{\delta ,m},\omega ^{2\alpha })=\lim _{m\to \infty }L_p(1+2i-2l_{\delta ,m},\omega ^{2l-2i})\\
&~~~~~~~~~~~~~~ =L_p(1+2i-2l,\omega ^{2l-2i})\quad \text{for\ all\ }i\ \text{with}\ 1\le i\le [r/2],\\
&\lim_{m\to \infty }L^{(p)}(1+r/2-l_{\delta ,m},\chi _{T'}\omega ^\alpha )=\lim_{m\to \infty }L_p(1+r/2-l_{\delta ,m},\chi _{T'}\omega ^{l+(p-1)/2^\delta -r/2}), \\
&~~~~~~~~~~~~~~~~~~~~~~~~= L_p(1+r/2-l,\chi _{T'}\omega ^{l+(p-1)/2^\delta -r/2}) \quad \text{for\ }r\ \text{even}. 
\end{align*}

Here, due to the convergence of the factor $L^{(p)}(1+2i-2l_{\delta ,m},\omega ^{2\alpha })^{-1}$, the condition of regularity on $p$ is necessary. 
In fact, if $p$ is a regular prime, then $L_p(1+2i,\omega ^{-2i})\neq 0$ for all $i$ with $i\in \Z_{\ge 1}$
and hence 
\[\lim_{m\to \infty }L^{(p)}(1+2i-2l_{\delta ,m},\omega ^{2\alpha })=L_p(1+2i-2l,\omega ^{2l-2i})\neq 0. \] 
This means that $L^{(p)}(1+2i-2l_{\delta ,m},\omega ^{2\alpha })^{-1}$ converges. 

We consider the factor $F_q(T',\omega ^\alpha (q)q^{l_{\delta, m}-r-1})$. 
Since 
\begin{align*}
\omega ^\alpha (q)q^{l_{\delta ,m}-r-1}&\equiv \omega ^\alpha (q) q^{a(m) p^{b(m)}}q^{l-r-1}\\
&\equiv \omega ^\alpha (q) \omega ^{a(m)}(q)q^{l-r-1}\\
&\equiv \omega ^{(p-1)/2^\delta }(q)q^{l-r-1}\bmod{p^{b(m)+1}}
\end{align*}
and the constant term of $F_q(T',X)$ is $1$, 
we have 
\begin{align}
\label{eq:Fq}
&F_q(T',\omega ^\alpha (q)q^{l_{\delta, m}-r-1})\equiv F_q(T',\omega ^{(p-1)/2^\delta }(q)q^{l-r-1}) \bmod{p^{b(m)+1}}. 
\end{align}
This shows 
\begin{align*}
&\lim_{m\to \infty }F_q(T',\omega ^\alpha (q)q^{l_{\delta ,m}-r-1})= F_q(T',(q/p)^\delta q^{l-r-1}). 
\end{align*}
Summering these formulas, we have the following. 
\begin{align*}
&\lim _{m\to \infty} a_{l_{\delta ,m},\chi}(T)^\sigma =2^{[(r+1)/2]}L^{(p)}(1-l,(q/p)^\delta )^{-1}\left(\prod _{i=1}^{[r/2]}L_p(1+2i-2l,\omega ^{2l-2i})^{-1}\right) \\
&~~~~~\times \prod _{q\neq p} F_q(T',(q/p)^\delta q^{l-r-1})\times 
\begin{cases}
1 \quad \text{if}\ r\ \text{odd},\\
L_p(1+r/2-l,\chi _{T'}\omega ^{l+(p-1)/2^\delta -r/2})  \quad \text{if}\ r\ \text{even}. 
\end{cases}
\end{align*}

We need to check that this convergence is uniform in $T$.
The main issue is dependence on the quadratic character $\chi _{T'}$.   
If the conductor of $\chi _{T'}\omega ^{l+(p-1)/2^\delta -r/2}$ is divisible by a prime $q$ other than $p$, 
then there exists a formal power series $\Phi_{T'}(X)\in \Z_p[\![X]\!]$ such that 
$L_p(s,\chi _{T'}\omega ^{l+(p-1)/2^\delta -r/2})=\Phi_{T'}((1+p)^s-1)$ (cf. Washington \cite{Wa} Theorem 7.10). 
By this fact and (\ref{eq:Fq}), we see that the convergence is uniform in $T$.  

On the other hand, we take a sequence $\{k_{\delta ,m}\}$ of positive even integers such that $k_{\delta ,m}\to (l,l+(p-1)/2^\delta )\in {\boldsymbol X}$ ($m\to \infty $). 
We can confirm similarly that $L(1-k_{\delta ,m},{\boldsymbol 1})$, $L(1+2i-2k_{\delta ,m},{\boldsymbol 1})$, $F_q(T',q^{k_{\delta ,m}-r-1})$ and $L(1+r/2-k_{\delta ,m},\chi _{T'})$ have same $p$-adic limits respectively as above, and also that $\lim_{m\to \infty }F_p(T',p^{k_{\delta ,m}-r-1})=1$. 
Therefore we get 
\[\lim _{m\to \infty }(E^n_{l_{\delta, m},\chi})^{\sigma }=\lim _{m\to \infty}E^n_{k_{\delta, m}}=\widetilde{E}^n_{(l,l+(p-1)/2^\delta )}. \]
\end{proof}
\begin{Rem}
\begin{enumerate}
\item
We assumed that $p$ is regular. 
The condition we need actually is 
$L_p(1+2i-2l,\omega ^{2l-2i})\neq 0$ for all $l<j\le [n/2]$. 
Of course, if $p$ is regular, this condition holds.  
\item
Since $\widetilde{E}^n_{(l,l+(p-1)/2^\delta )}$ is mod $p$ singular of rank $2l$ by the results of authors \cite{Bo-Ki3}, we know that $\lim_{m\to \infty}(E_{l_{\delta, m},\chi })^\sigma $ is too, but we can also check directly by calculating the Fourier coefficients as follows.
Let $r>2l$. Then the factor $L^{(p)}(1+2i-2l_{\delta,m},\omega ^{2\alpha })^{-1}$ for $i=l$ appears in the Fourier coefficient of $a_{l_{\delta ,m},\chi}(T)^{\sigma}$. 
If we put $s=2l_m-2l$, $\psi =\omega ^{2l_m-2l+2\alpha }$ in (\ref{eq:Lp}), then we have $\psi \omega ^{-s}=\omega ^{2\alpha }$ and hence 
\[L^{(p)}(1+2i-2l_m,\omega ^{2\alpha })^{-1}=L_p(1+2l-2l_m,\omega ^{2l_m-2l+2\alpha })^{-1}.\] 
Since $\omega ^{2l_m-2l+2\alpha }={\boldsymbol 1}$ and $L_p(s,{\boldsymbol 1})$ has a pole at $s=1$, we have
\begin{align*}
\lim_{m\to \infty}L^{(p)}(1+2l-2l_m,\omega ^{2\alpha })^{-1}&=\lim_{m\to \infty}L_p(1+2l-2l_m,{\boldsymbol 1})^{-1}=0. 
\end{align*}
On the other hand, by the assumption, we have $L_p(1+2i-2l,\omega ^{2l-2i})\neq 0$ for all $1\le j\le [n/2]$. 
Therefore the other factors have $p$-adic limits in $\Q_p$, this shows that all Fourier coefficients for the higher degree vanish. 
\end{enumerate}
\end{Rem}

By the result for $\widetilde{E}^n_{(l,l+(p-1)/2^j)}$ in \cite{Bo-Ki3}, we obtain the following statement. 
\begin{Cor}
\label{cor:sing}
The following holds under the situation as Theorem \ref{thm:sing}.
\begin{enumerate}
\item
There exists some constant $\mu _\delta $ with $\mu _\delta \le 0$ such that  
\begin{align}
\label{eq:cong}
p^{-\mu _\delta }(E_{l_{\delta ,m},\chi}^{n})^\sigma \equiv 
\sum _{\substack{{\rm gen}(S)\\{\rm level}(S)\mid p\\ \chi _S=(*/p)^\delta }}
p^{-\mu _\delta }a_{\delta }({\rm gen}(S))\cdot (\Theta ^{n}_{{\rm gen}(S)})^0 \bmod{p^{c(m)}}, 
\end{align}
where $c(m)$ is a certain positive integer satisfying $c(m)\to \infty $ if $m\to \infty$. 
Here the summation goes over finitely many genera ${\rm gen}(S)$ of $S\in \Lambda_{2l}^+/{\rm GL}_{2l}(\Z)$ such that ${\rm level}(S)\mid p$ and $\chi _S=(*/p)^\delta $. 
\item
We have 
\[ \lim_{m\to \infty } (E^n_{l_{\delta, m},\chi})^{\sigma }=\sum _{\substack{{\rm gen}(S)\\ {\rm level}(S)\mid p\\ \chi _S=(*/p)^\delta }}a_\delta ({\rm gen}(S))\cdot (\Theta ^{n}_{{\rm gen}(S)})^0. \]
\end{enumerate}
Here, the notation is as follows. 
\begin{itemize} 
\item $(\Theta ^{n}_{{\rm gen}(S)})^0$ is the unnormalized genus theta series defined by 
\[(\Theta ^{n}_{{\rm gen}(S)})^0:=\sum _{S\in {\rm gen}(S)}\frac{1}{\epsilon (S)}\theta ^{n}_S, \] 
\item
$\theta ^{n}_S$ is as in (\ref{eq:theta}),
\item
$\epsilon (S)$ is the order of automorphism group of $S$, 
%\item
%$\chi _S$ is a Dirichlet character mod $N$ defined by 
%$\chi _S(d)={\rm sign} (d)^\frac{m}{2} \left( \frac{(-1)^\frac{m}{2}\det 2S}{|d|} \right)$,
\item 
$(*/p)^0$ is understood as the trivial character ${\boldsymbol 1}_p$ mod $p$, 
\item 
$a_\delta ({\rm gen}(S))\in \Q$ are certain constants coming from Fourier coefficients of the Eisenstein series of level $1$.  
\end{itemize}
\end{Cor}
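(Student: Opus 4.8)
The plan is to derive both assertions by combining Theorem \ref{thm:sing} with the explicit genus-theta decomposition of the $p$-adic Eisenstein series $\widetilde{E}^n_{(l,l+(p-1)/2^\delta)}$ established in \cite{Bo-Ki3}. Assertion (2) will then be essentially immediate, and assertion (1) is the finite-level refinement that records the rate of the $p$-adic convergence already obtained in the proof of Theorem \ref{thm:sing}; accordingly I would prove (2) first and treat (1) as its quantitative version.

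For (2), Theorem \ref{thm:sing} gives
\[\lim_{m\to\infty}(E^n_{l_{\delta,m},\chi})^\sigma=\widetilde{E}^n_{(l,l+(p-1)/2^\delta)},\]
so it remains only to identify the right-hand side. By the results of \cite{Bo-Ki3}, the $p$-adic Eisenstein series $\widetilde{E}^n_{(l,l+(p-1)/2^\delta)}$ is mod $p$ singular of rank $2l$ and admits the genus-theta expansion
\[\widetilde{E}^n_{(l,l+(p-1)/2^\delta)}=\sum_{\substack{{\rm gen}(S)\\ {\rm level}(S)\mid p\\ \chi_S=(*/p)^\delta}}a_\delta({\rm gen}(S))\cdot(\Theta^n_{{\rm gen}(S)})^0,\]
the sum being finite since only finitely many genera of $S\in\Lambda_{2l}^+/{\rm GL}_{2l}(\Z)$ have level dividing $p$ and associated quadratic character $(*/p)^\delta$. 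Substituting yields (2).

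For (1), I would render the convergence in Theorem \ref{thm:sing} quantitative and then clear denominators. Revisiting that proof, every factor entering $a_{l_{\delta,m},\chi}(T)^\sigma$ converges $p$-adically at a rate governed by $b(m)$: the Euler factors satisfy the congruence (\ref{eq:Fq}) modulo $p^{b(m)+1}$, while each Kubota--Leopoldt factor has a fixed ($m$-independent) character and a variable argument $1-l_{\delta,m}\to 1-l$, so its convergence follows from the continuity of $L_p(s,\psi)$ in $s$; all of these estimates are uniform in $T$ via the Iwasawa power-series description $\Phi_{T'}\in\Z_p[\![X]\!]$ of $L_p(s,\chi_{T'}\omega^{l+(p-1)/2^\delta-r/2})$ exactly as in the proof of Theorem \ref{thm:sing}. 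Hence there is a single integer $c(m)\to\infty$ so that the Fourier coefficients of $(E^n_{l_{\delta,m},\chi})^\sigma$ agree with those of the limit modulo $p^{c(m)}$, uniformly in $T$. Since the limiting coefficients are assembled from the finitely many rational numbers $a_\delta({\rm gen}(S))$ and the rational Fourier coefficients of the $(\Theta^n_{{\rm gen}(S)})^0$, their $p$-denominators are bounded; choosing $-\mu_\delta\ge 0$ to be this bound makes both sides $p$-integral after multiplication by $p^{-\mu_\delta}$, turning the uniform congruence into (\ref{eq:cong}).

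The step I expect to be the main obstacle is the bookkeeping of $p$-adic valuations needed to pin down $\mu_\delta$ and the precise $c(m)$. The delicate point is the factors $L^{(p)}(1-l_{\delta,m},\omega^\alpha)^{-1}$ and $L^{(p)}(1+2i-2l_{\delta,m},\omega^{2\alpha})^{-1}$, whose limits may involve a pole of $L_p(s,\boldsymbol 1)$ (as in the Remark following Theorem \ref{thm:sing}) and so determine the denominator exponent $-\mu_\delta$; here the regularity hypothesis on $p$, through the nonvanishing $L_p(1+2i-2l,\omega^{2l-2i})\neq 0$, is exactly what keeps these denominators bounded. A further subtlety is securing a single $c(m)$ valid for all $T$ at once, despite the dependence of $L_p(1+r/2-l,\chi_{T'}\omega^{l+(p-1)/2^\delta-r/2})$ on the quadratic character $\chi_{T'}$ that varies with $T$; this is precisely where the uniform estimate furnished by $\Phi_{T'}$ is indispensable.
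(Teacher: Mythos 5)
Your proposal is correct and takes essentially the same route as the paper: the paper obtains both parts by combining Theorem \ref{thm:sing} with the genus-theta decomposition of $\widetilde{E}^n_{(l,l+(p-1)/2^\delta)}$ established in \cite{Bo-Ki3}, assertion (1) being exactly the quantitative (mod $p^{c(m)}$) form of the uniform-in-$T$ convergence proved there. Your denominator-clearing via $p^{-\mu_\delta}$ and the appeal to the Iwasawa power series $\Phi_{T'}$ for uniformity reproduce the paper's (largely implicit) reasoning.
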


\begin{Rem}
\label{rem:Ta}
\begin{enumerate}
\item
It is shown by Takemori in his unpublish note that there exists $E^n_{l,\chi}\in M^n_l(\Gamma _0(p),\chi )$ such that $(E^n_{l,\chi})^\sigma \equiv 1$ mod $p$ for some $l$, if $p$ is a regular prime.
For usual mod $p$ singular form $F\in M^n_k(\Gamma _0(p))$, by multiplying $(E^n_{l,\chi})^\sigma $, 
we can construct mod $p$ singular form $F(E^n_{l,\chi})^\sigma \in M^n_{k+l}(\Gamma _0(p),\chi ^\sigma )$ of nonquadratic nebentypus.  
\item
The embedding $\sigma $ gives an isomorphism of fields from $\Q(\mu _{p-1})$ to $\Q(\omega (d))$ 
($\subset \Q_p$) over $\Q$, where $d=d_i$ is as in the beginning of Section \ref{sec:4}.
Then ``mod $p^{c(m)}$'' in (\ref{eq:cong}) means ``mod $p^{c(m)}\Z_{(p)}[\omega (d)]$''.  
\item
Let $\frak{p}$ be a prime ideal in $\Z[\mu_{p-1}]$ such that the corresponding character is $\psi $ for which $(\psi ^{-1})^\sigma =\omega $.
Then (\ref{eq:cong}) corresponds to $\lambda _m\cdot p^{-\mu _\delta }E^n_{l_{\delta, m},\chi}$ being mod $\frak{p}^{c(m)}$ singular of $\frak{p}$-rank $2l$, where $\lambda _m$ is a rational integer with $v_p(\lambda _m)=0$ such that $\lambda _m\cdot p^{-\mu _\delta }E^n_{l_{\delta, m},\chi}$ has Fourier coefficients in $\Z[\mu _{p-1}]$. 
In fact, we can confirm this as follows. 
We define $\sigma ^{-1}:\Q(\omega (d))\to \Q(\mu _{p-1})$ by $\sigma ^{-1}(\omega (d)):=\zeta _{p-1}$.   
Then it is easy to see that $\sigma ^{-1}(p\Z[\omega (d)])=p\Z[\mu_{p-1}]\subset \frak{p}$. 
Hence we may apply $\sigma ^{-1}$ to the both sides of (\ref{eq:cong}).  
\end{enumerate}
\end{Rem}

%%%%%%%%%%%%%%%%%%%%%%%%%%%%
\section*{Acknowledgment}
This work was supported by JSPS KAKENHI Grant Number 22K03259.
%%%%%%%%%%%%%%%%%%%%%%%%%%%%%%%%%%

\section*{Data availability} 
During the work on this publication, no data sets were generated, used
or analyzed. Thus, there is no need for a link to a data repository.

%\bibliographystyle{amsplain}
%\bibliography{texref}

\providecommand{\bysame}{\leavevmode\hbox to3em{\hrulefill}\thinspace}
\providecommand{\MR}{\relax\ifhmode\unskip\space\fi MR }
% \MRhref is called by the amsart/book/proc definition of \MR.
\providecommand{\MRhref}[2]{%
  \href{http://www.ams.org/mathscinet-getitem?mr=#1}{#2}
}
\providecommand{\href}[2]{#2}

\begin{flushleft}
Siegfried B\"ocherer\\
Kunzenhof 4B \\
79177 Freiburg, Germany \\
Email: boecherer@t-online.de
\end{flushleft}

\begin{flushleft}
  Toshiyuki Kikuta\\
  Faculty of Information Engineering\\
  Department of Information and Systems Engineering\\
  Fukuoka Institute of Technology\\
  3-30-1 Wajiro-higashi, Higashi-ku, Fukuoka 811-0295, Japan\\
  E-mail: kikuta@fit.ac.jp
\end{flushleft}

\end{document}